\documentclass[11pt]{amsart}

\usepackage{amsmath,amssymb}
\usepackage{accents}
\usepackage{graphicx}
\usepackage{verbatim}

\textwidth = 6 in \textheight = 8 in 
\oddsidemargin = 0.0 in \evensidemargin = 0.0 in 
\hoffset = 0.2 in
\headsep = 0.2 in
\parskip = 0.04in

\newlength{\dhatheight}

\newtheorem{theorem}{Theorem}[section]

\newtheorem{proposition}[theorem]{Proposition}

\newtheorem{remark}[theorem]{Remark}
\newtheorem{corollary}[theorem]{Corollary}
\newtheorem{definition}[theorem]{Definition}

\newcommand{\N}{{\mathbb N}}
\newcommand{\Q}{{\mathbb Q}}
\newcommand{\R}{{\mathbb R}}
\newcommand{\Z}{{\mathbb Z}}

\newcommand{\ep}{\epsilon}
\newcommand{\dd}{\Delta}
\newcommand{\ra}{\rightarrow}
\newcommand{\ras}{{\stackrel{~~*}{\ra}}}
\newcommand{\ga}{\Gamma}

\newcommand{\pp}{{\mathcal{P}}}

\newcommand{\td}{d}

\newcommand{\tht}{\Theta}

\newcommand{\nff}{{normal filling}}
\newcommand{\edg}{{normal form diagram}}

\newcommand{\cc}{{\mathcal{N}}}

\newcommand\ct{{T}}

\newcommand{\stkg}{{stacking}}
\newcommand{\stkbl}{{stackable}}

\newcommand{\fstkbl}{{stackable}}
\newcommand{\astkg}{{algorithmic stacking}}
\newcommand{\astkbl}{{algorithmically stackable}}

\newcommand{\afstkbl}{{algorithmically stackable}}

\newcommand{\prs}{{\stackrel{~~p*}{\ra}}}

\newcommand{\mue}{\mu^e}

\newcommand{\dire}{{\vec E(X)}}  
\newcommand\ves{{\vec E_{r}}}  
\newcommand\dgd{{\vec E_d}}  


\newcommand{\hr}{\sff(\ves)}
\newcommand{\wa}{e_{w,a}}
\newcommand{\xa}{e_{x,a}}
\newcommand{\alg}{S_\ff}

\newcommand{\ega}{e_{g,a}}

\newcommand{\sff}{\phi}
\newcommand{\ff}{\Phi}
\newcommand{\ttt}{{\mathcal T}}
\newcommand{\vece}{{\vec E}}
\newcommand{\lbl}{\lambda}
\newcommand{\rep}{\rho}
\newcommand{\path}{\delta}
\newcommand{\init}{\iota}

\newcommand{\sredp}{reduction procedure}
\newcommand{\srecp}{stacking recursion} 

\begin{document}
\title[A uniform model for almost convexity and rewriting systems]
{A uniform model for almost convexity and rewriting systems}

\author[M.~Brittenham]{Mark Brittenham}
\address{Department of Mathematics\\
        University of Nebraska\\
         Lincoln NE 68588-0130, USA}
\email{mbrittenham2@math.unl.edu}

\author[S.~Hermiller]{Susan Hermiller}
\address{Department of Mathematics\\
        University of Nebraska\\
         Lincoln NE 68588-0130, USA}
\email{smh@math.unl.edu}

\thanks{2010 {\em Mathematics Subject Classification}. 20F65; 20F10, 68Q42}

\begin{abstract}
We introduce a topological property for 
finitely generated groups
called \stkbl\ that
implies the existence of 
an inductive procedure for constructing
van Kampen diagrams with respect to a
particular finite presentation. 
We also define \astkbl\ groups, for which
this procedure is an algorithm.
This property gives a common
model for algorithms arising from both rewriting
systems and almost convexity for groups.
\end{abstract}

\maketitle


\section{Introduction}\label{sec:intro}


In geometric group theory, several properties of finitely
generated groups have been defined using a language of
normal forms together with geometric or combinatorial conditions
on the associated Cayley graph, most notably in the concepts of
combable groups and automatic groups in which the normal forms
satisfy a fellow traveler property. 
In this paper we use 
a set of normal forms together with another
topological property
on the Cayley graph of a finitely generated group, 
namely a notion
of ``flow'' toward the identity vertex,
to define a property 
which we call \stkbl.
  
More specifically, let $G$ be a group with a finite
inverse-closed generating set $A$, and   
let $\ga=\ga(G,A)$ be the associated
Cayley graph, with set $\vece$ of directed edges.  
For each $g \in G$ and $a \in A$,
let $\ega$ denote the directed edge in $\vece$
with initial vertex $g$, terminal vertex $ga$,
and label $a$.
Given a set $\cc \subset A^*$ of normal forms for $G$ over $A$,
write $y_g$ for the normal form of 
the element $g$ of $G$.
Note that whenever an equality of 
words $y_ga=y_{ga}$ or $y_g=y_{ga}a^{-1}$ holds, 
there is a van Kampen diagram for the word
$y_gay_{ga}^{-1}$ that contains no 2-cells;
in this case, we call the edge $\ega$ {\em degenerate}.
Let $\dgd=\vec E_{d,\cc}$ be the set of degenerate
edges, and let
$\ves =\vec E_{r,\cc}:= \vece \setminus \dgd$.

\begin{definition}\label{def:fstkbl}
A group $G$ is {\em \stkbl} with respect to a finite 
inverse-closed
generating set $A$ if there exist a set $\cc$ of normal forms
for $G$ over $A$ containing the empty word, a well-founded 
strict partial ordering $<$ on $\ves$,
and a constant $k$, such that
for every $g \in G$ and $a \in A$, 
there exists a 
path $p$
from $g$ to $ga$ in
$\ga$ 
of length at most $k$ 
satisfying the property that
whenever $e'$ is a directed edge in the path $p$,
either $e',\ega \in \ves$ and $e' < \ega$, or else
$e' \in \dgd$.
\end{definition}

In Section~\ref{sec:stackdef} we show 
in Proposition~\ref{prop:prefixclosed} that the set $\cc$
of normal forms for a \stkbl\ group is
closed under taking prefixes.  Thus 
$\cc$ determines 
a maximal tree in $\ga$, namely the
edges lying on paths that are labeled by words in $\cc$
and that start at the vertex labeled by the identity of $G$.
This leads to the following topological description 
of stackability.
Let $\vec P$ denote the set of directed
paths in $\ga$.  
For each $g \in G$ and $a \in A$,
we view the two directed edges $e_{g,a}$ and $e_{ga,a^{-1}}$  of $\ga$ to
have a single underlying undirected edge in $\ga$.
A {\em flow} function
associated to a maximal tree $\ttt$ in $\ga$ is a
function $\ff:\vece \ra \vec P$ 
satisfying the properties that: 
\begin{itemize}
\item[(F1)] For each edge $e \in \vece$,
the path $\ff(e)$ has the same initial and terminal
vertices as $e$.
\item[(F2d)] If the undirected edge underlying $e$ 
lies in the tree $\ttt$, then $\ff(e)=e$.
\item[(F2r)]  The transitive closure 
$<_\ff$ of the relation $<$ on
$\vec E$, defined by 
\begin{itemize}
\item[]
$e' < e$ whenever $e'$ lies on the path $\ff(e)$
and the undirected edges underlying both
$e$ and $e'$ do not lie in $\ttt$,
\end{itemize}
is a well-founded strict
partial ordering.
\end{itemize}
That is, the map $\ff$ fixes the edges lying in the tree $\ttt$
and describes a ``flow'' of the
non-tree edges toward the tree (or toward the basepoint);
starting from a non-tree edge and
iterating this function finitely many times results
in a path in the tree.
A flow function is {\em bounded} if there is
a constant $k$ such that for all $e \in \vec E$,
the path $\ff(e)$ has length at most $k$.

\smallskip

\noindent {\bf Corollary~\ref{cor:stkbldefs}.}  
{\em A group $G$ is stackable with respect to a
finite symmetric generating set $A$ if and
only if the Cayley graph
$\ga(G,A)$ admits a bounded flow function.
}

\smallskip

\noindent The two equivalent descriptions of stackability
in Corollary~\ref{cor:stkbldefs} are written to display 
connections to two other properties exploited later in the paper:  
Definition~\ref{def:fstkbl} closely resembles
 Definition~\ref{def:ac} of
almost convexity, and 
the bounded flow function 
is analogous to rewriting operations.

We show that every \stkbl\ group is
finitely presented
(in Proposition~\ref{prop:prefixclosed}) and admits
an inductive procedure which, upon input of a word 
in the generators that represents the identity of the group, 
constructs a van Kampen
diagram for that word over this presentation.
These van Kampen diagrams 
for \stkbl\ groups are constructed by building
up stacks of
van Kampen diagrams associated to
recursive edges, leading to the terminology
``\stkbl\ groups''. 
Letting $\rep:A^* \ra G$ be the canonical monoid
homomorphism and letting $\lbl:\vec P \ra A^*$ map
each path in $\ga$ to the word labeling its edges, 
we define a group $G$ to be {\em algorithmically stackable}
if the subset 
$$
S_\ff:=\{(w,a,\lbl(\ff(e_{\rep(w),a}))) \mid w \in A^*, a \in A\}
$$
of $A^* \times A \times A^*$
associated to a bounded flow function $\ff$ on $\ga$
is recursive.
This stronger property
guarantees that the inductive procedure for 
constructing van Kampen diagrams is an
algorithm.

%

\smallskip

\noindent{\bf Theorem~\ref{thm:solvwp}.} 
{\em 
If $G$ is \astkbl,
then $G$ has solvable word problem.
}

\smallskip

The \stkbl\ property provides a uniform
model for the 
procedures for
building van Kampen diagrams that arise
in both the example of groups with a finite complete
rewriting system and the example of almost convex groups
We discuss these and other examples of stackable
structures for groups in Section~\ref{sec:examples}.
To begin, in Section~\ref{subsec:bs} 
we give explicit details of a bounded
flow function for the Baumslag-Solitar group 
$BS(1,p)$ with $p \ge 3$.
In Section~\ref{sec:rs} we consider groups
that can be presented by rewriting systems, and
prove the following. 

\smallskip

\noindent{\bf Theorem~\ref{thm:crsrecit}.} {\em
A group admitting a finite complete
rewriting system is \astkbl.
}

\smallskip

In Section~\ref{subsec:f}, we consider Thompson's group $F$;
that is, the group of orientation-preserving piecewise linear
automorphisms of the unit interval for which all linear
slopes are powers of 2, and all breakpoints lie in the
the 2-adic numbers. 
In~\cite{chst}, Cleary, Hermiller, Stein, and Taback
show that Thompson's group 
$F$ is \fstkbl\ (although they do not use this
terminology, they build a \stkbl\ structure
in their construction of a 1-combing
for $F$), and their proof can be shown to give an \astkg.
We show in Section~\ref{subsec:f} that the 
set of normal forms associated to this \stkbl\ structre is a 
deterministic context-free language.
Thompson's group $F$ has been the focus of considerable
research in recent years, and yet the questions of
whether $F$ has a finite complete rewriting system  
or is automatic
are open (see the problem list at~\cite{thompsonpbms}).
Cleary and Taback~\cite{clearytaback}
have shown that Thompson's group $F$ is not almost convex
(in fact, Belk and Bux~\cite{belkbux}
have shown that $F$ is
not even minimally almost convex).
Thus $F$ is a potential example of an 
\astkbl\ group that has none of these 
other algorithmic and geometric properties.

A flow function can be
viewed as giving directions pointing from edges
toward the basepoint vertex labeled by the
identity $\ep$ of $G$, in the Cayley complex for 
the stacking presentation.
(For example, an illustration of this flow for the Baumslag-Solitar
group $BS(1,2)$ is given in Figure~\ref{fig:bs12}
in Section~\ref{subsec:bs}.)
That is, from any degenerate edge one can follow
the maximal tree $\ttt$ associated to $\cc$
to the next edge $e'$
along the unique simple path toward $\ep$, 
and from any recursive edge $e$, one can 
follow a 2-cell to a path containing
an edge $e'$ that is either degenerate,
else is recursive and satisfies 
$e' <_{\ff} e$.  In both cases, we view $e'$ 
as ``closer'' than $e$ to the basepoint.
A natural special case to consider occurs
when this notion of ``closer'' coincides 
with the path metric $d_X$ on the Cayley graph $X^1=\ga$.  
That is, define the function $\alpha:\vece \ra \Q$
by setting
$\alpha(e):= \frac{1}{2}(d_X(\ep,a)+d_X(\ep,b))$
for each edge $e \in \dire$ 
with endpoints $a$ and $b$, 
so that $\alpha$ measures the average distance from
a point of $e$ to the origin.
\begin{definition}\label{def:geostk}
A group $G$ is {\em geodesically \stkbl} if $G$ has a
finite symmetric generating set $A$ with a 
stackable structure over a normal form set $\cc$ and
an associated bounded flow
function $\ff$ 
such that all of the elements of $\cc$ label geodesic
paths in $\ga(G,A)$, and whenever $e',e \in \ves$ with
$e' <_{\ff} e$, then $\alpha(e')<\alpha(e)$.
\end{definition}
In Section~\ref{subsec:ac} we show that
this property is equivalent to
Cannon's almost convexity property~\cite{cannon} 
(see Definition~\ref{def:ac}).
The proof yields the somewhat unexpected result
that any geodesically stackable structure can be
replaced by another that is both algorithmic and
based upon the shortlex normal forms.

\smallskip

\noindent{\bf Theorem~\ref{thm:aceti}.}  {\em
Let $G$ be a group with finite symmetric generating set $A$.
The following are equivalent:
\begin{enumerate}
\item The pair $(G,A)$ is almost convex.
\item The pair $(G,A)$ is geodesically \stkbl.
\item The pair $(G,A)$ is geodesically \afstkbl\ with 
respect to shortlex normal forms.
\end{enumerate}
}

\smallskip

\noindent The properties in Theorem~\ref{thm:aceti}
are satisfied by all
word hyperbolic groups and  
cocompact discrete groups of isometries of Euclidean
space, with respect to every generating set~\cite{cannon}.
Hence Theorem~\ref{thm:aceti} shows that every
word hyperbolic group is \astkbl.




One of the motivations for the definition of automatic 
groups was to 
gain a better understanding of the fundamental
groups of 3-manifolds, in particular
to find practical methods for computing in these groups.
However, the fundamental group of a 3-manifold 
is automatic if and only if its JSJ decomposition does not 
contain manifolds with a uniform Nil or Sol 
geometry~\cite[Theorem~12.4.7]{echlpt}.
In contrast,~\cite{hs} Hermiller and Shapiro have shown that
the fundamental group of every closed 3-manifold with a
uniform geometry other than hyperbolic must have a
finite complete rewriting system, and so combining this result
with Theorems~\ref{thm:crsrecit} and~\ref{thm:aceti}
yields the following.

\smallskip

\begin{corollary} 
If $G$ is the fundamental group of a closed 3-manifold with
a uniform geometry, then $G$ is \astkbl. 
\end{corollary}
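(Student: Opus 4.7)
The plan is to split by the Thurston geometry on the 3-manifold and reduce each case to one of the two main theorems already cited. There are eight uniform (model) geometries for closed 3-manifolds: spherical ($S^3$), Euclidean ($\mathbb{E}^3$), hyperbolic ($\mathbb{H}^3$), $S^2 \times \mathbb{R}$, $\mathbb{H}^2 \times \mathbb{R}$, $\widetilde{SL_2(\mathbb{R})}$, Nil, and Sol. I would first peel off the hyperbolic case, and then handle the remaining seven geometries in a single stroke via the Hermiller--Shapiro rewriting-system result.

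For the hyperbolic case, if $G$ is the fundamental group of a closed hyperbolic 3-manifold then $G$ is word hyperbolic. The remark immediately after Theorem~\ref{thm:aceti} notes that every word hyperbolic group (in fact with respect to every finite symmetric generating set) is almost convex, and Theorem~\ref{thm:aceti} then gives that $(G,A)$ is geodesically \astkbl, which in particular implies \astkbl.

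For the other seven geometries, I would invoke~\cite{hs}: the fundamental group of every closed 3-manifold carrying one of the non-hyperbolic uniform geometries admits a finite complete rewriting system. Applying Theorem~\ref{thm:crsrecit} to such a rewriting system then yields that $G$ is \astkbl\ with respect to the finite generating set coming from that rewriting system.

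I don't foresee a real obstacle, since both main ingredients (the rewriting-system result from~\cite{hs} and the almost convexity of word hyperbolic groups) are cited as black boxes, and Theorems~\ref{thm:crsrecit} and~\ref{thm:aceti} do all the conceptual work. The only mild subtlety is making sure the case split is exhaustive: one must observe that by geometrization for closed 3-manifolds admitting a uniform geometry, these eight possibilities are indeed the complete list, so no further case analysis is required.
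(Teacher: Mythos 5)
Your proof is correct and follows exactly the paper's route: the non-hyperbolic geometries are handled by the Hermiller--Shapiro rewriting-system result together with Theorem~\ref{thm:crsrecit}, and the hyperbolic case by Cannon's almost convexity of word hyperbolic groups together with Theorem~\ref{thm:aceti}.
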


\smallskip 

The algorithmically stackable property also
allows a wider range of Dehn (or isoperimetric) functions
than those for automatic or 
combable groups, whose
Dehn functions are at most 
quadratic~\cite{echlpt} or exponential (shown
by Gersten; see, for 
example,~\cite{riley}),
respectively. 
In particular, the iterated Baumslag-Solitar group 
$$G_k=\langle a_0,a_1,...,a_k \mid a_i^{a_{i+1}}=a_i^2; 
0 \le i \le k-1\rangle$$
admits a finite complete rewriting
system for each $k \ge 1$ (first described by Gersten;
see \cite{hmeiermeastame} for details),
and so Theorem~\ref{thm:crsrecit}
shows that $G_k$ is \astkbl.
Gersten~\cite[Section~6]{gerstenexpid} showed that 
the Dehn function for $G_k$
grows at least as fast as the function
$$
n \mapsto \underbrace{2^{2^{.^{.^{.^{2^n}}}}}}_{\hbox{k times}}~.
$$
Hence the class of \astkbl\ groups includes groups
whose Dehn functions are 
towers of exponentials.



On the other hand, in~\cite{britherm}, the present authors show 
that stackable groups are tame combable, as defined
by Mihalik and Tschantz~\cite{mihaliktschantz}.
Tschantz~\cite{tschantz} has conjectured that there
exists a finitely presented group that is not
tame combable. Such a group would also not admit the
stackable property with respect to any finite symmetric generating
set.



\section{Notation} \label{sec:notation}


Throughout this paper, let $G$ be a group
with a finite {\em symmetric} generating set; that
is, such that the generating set $A$ is closed under inversion.
Throughout the paper we assume that no element of $A$
represents the identity element of $G$.

Let $\rep:A^* \ra G$ be the canonical monoid homomorphism.
A set $\cc$ of {\em normal forms} for $G$ over $A$ is a 
subset of 
$A^*$ such that the restriction of the
map $\rep$ to $\cc$ is a bijection.
As in Section~\ref{sec:intro}, the symbol $y_g$ denotes
the normal form for $g \in G$.  By slight abuse
of notation, we use the symbol $y_w$ to denote the
normal form for $\rep(w)$ whenever $w \in A^*$.

Let $\epsilon$ denote the identity of $G$, and let
$1$ denote the empty word in $A^*$.
For a word $w \in A^*$, we write $w^{-1}$ for the 
formal inverse of $w$ in $A^*$.
For words $v,w \in A^*$, we write $v=w$ if $v$
and $w$ are the same word in $A^*$, and write $v=_G w$ if
$v$ and $w$ represent the same element of $G$; that is, 
if $\rep(v)=\rep(w)$.

Let $\ga$ be the Cayley graph of $G$ with
respect to $A$, with path metric $d$.
%
A word $w \in A^*$ is called {\em geodesic}
if $w$ labels a geodesic path in $\ga$.
Whenever $x \in A^*$ and $a \in A$,
we write $\xa$ to denote the directed edge $\ega$
where  $g=\rep(x)$ is the element 
of $G$ represented by $x$.
Define four maps

$\alpha:\vece \ra \Q$ by 
  $\alpha(\ega) := \frac{1}{2}(d(\ep,g)+d(\ep,ga))$,

$\lbl:\vec P \ra A^*$ by $\lbl(p):=$ the word labeling the path $p$,

$\init:\vece \ra G$ by $\init(\ega):=g$, and

$\path:G \times A^* \ra \vec P$ by $\path(g,w):=$ the
  path in $\ga$ starting at $g$ labeled by $w$.

Given a presentation 
$\pp = \langle A \mid R \rangle$ for $G$, the
presentation is {\em symmetrized} if
the generating set $A$ is symmetric
and the set $R$ of defining relations is closed under
inversion and cyclic conjugation.
Let $X$ be the Cayley 2-complex corresponding to this presentation,
whose 1-skeleton is $X^1=\ga$.
Let $E(X)$ denote the set of undirected edges
of $X$; we consider the two directed edges
$\ega$ and $e_{ga,a^{-1}}$ to have the same
underlying directed edge in $X$ between $g$ and $ga$ in $X$.

For an arbitrary word $w$ in $A^*$
that represents the
trivial element $\ep$ of $G$, there is a {\em van Kampen
diagram} $\dd$ for $w$ with respect to $\pp$.  
That is, $\dd$ is a finite,
planar, contractible combinatorial 2-complex with 
edges directed and
labeled by elements of $A$, satisfying the
properties that the boundary of 
$\dd$ is an edge path labeled by the
word $w$ starting at a basepoint 
vertex $*$ and
reading counterclockwise, and every 2-cell in $\dd$
has boundary labeled by an element of $R$.
For any van Kampen diagram
$\dd$ with basepoint $*$, let $\pi_\dd:\dd \ra X$
denote a cellular map such that $\pi_\dd(*)=\ep$ and
$\pi_\dd$ maps edges to edges preserving both
label and direction.

In general, there may be many different van 
Kampen diagrams for the word $w$.  Also,
we do not assume that van Kampen diagrams
in this paper are reduced; that is, we allow adjacent
2-cells in $\dd$ to be labeled by the same relator with
opposite orientations.

See for example~\cite{bridson} or~\cite{lyndonschupp} 
for an exposition of the theory of van Kampen diagrams.


\section{Procedures for constructing normal forms and 
van Kampen diagrams} \label{sec:stackdef}


The main goal of this section is to describe 
inductive procedures for finding normal
forms and for constructing van Kampen diagrams
for stackable groups.  
We begin with a discussion of the structure of
the normal forms
for a stackable group.



Let $G$ be a group that is \stkbl\ over a
symmetric generating set $A$, with \stkbl\ structure
$(\cc,<,k)$ from Definition~\ref{def:fstkbl}.  Then one can
define a function $\sff:\ves=\vec E_{r,\cc} \ra A^*$
by choosing, for 
each $\ega \in \ves$, a label $\sff(\ega) =a_1 \cdots a_n \in A^*$
of a directed path in $\ga$ satisfying the property that
$\sff(\ega) =_G a$,
$n \le k$, and 
either 
$e_{ga_1 \cdots a_{i-1},a_i} < \ega$ or
$e_{ga_1 \cdots a_{i-1},a_i} \in \dgd$ for each $i$.
(Note that although we have $\sff(\ega) =_G a$,  
the fact that $<$ is a strict partial ordering 
implies that the word $\sff(\ega)$ cannot be
the letter $a$.)
(See Figure~\ref{fig:stkgmap}.)
\begin{figure}
\begin{center}
\includegraphics[width=3.8in,height=1.0in]{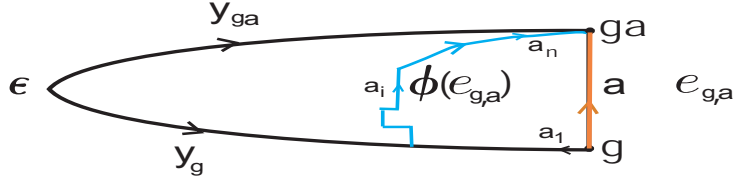}
\caption{The stacking map}\label{fig:stkgmap}
\end{center}
\end{figure}
This function is called a {\em \stkg\ map}.

Let $R_\sff$ be the closure of the set of words
$\{\sff(\ega) a^{-1} \mid g \in G, a \in A\}$ under inversion,
cyclic conjugation, and free reduction (except the
empty word); $R_\sff$ is
called the {\em stacking relation set}.

In the proof of the following proposition, we
give an inductive procedure which, upon input of any word
$w$ over the generators of a \stkbl\ group, will
output the normal form of the element $\rep(w)$ of the group $G$
represented by $w$.

\begin{proposition}\label{prop:prefixclosed}
Let $G$ be a stackable group over a generating set $A$.
Then $G$ is finitely presented by
 $\langle A \mid R_\sff\rangle$, 
where  $R_\sff$ is the \stkg\ relation set 
associated to a stacking map for $G$.
Moreover, the set $\cc$ of normal forms of a \stkbl\ structure is
closed under taking prefixes.
\end{proposition}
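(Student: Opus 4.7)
\noindent \emph{Proof plan.} The plan is to prove both conclusions by leveraging the stacking map $\sff$ and the well-founded order $<$. I first note that $R_\sff$ is finite: each $\sff(\ega)$ has length at most $k$, so each relator $\sff(\ega)a^{-1}$ has length at most $k+1$, and over the finite alphabet $A$ only finitely many such words exist, even after closure under inversion, cyclic conjugation, and free reduction. Each such relator represents $\ep$ in $G$ by construction, so $G$ is a quotient of $\langle A \mid R_\sff \rangle$.

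To show conversely that every word $w$ with $\rep(w) = \ep$ is a consequence of $R_\sff$, I would use an inductive rewriting procedure. While the path $\path(\ep, w)$ contains any recursive edge, pick one such edge $\ega$ and apply the relator $\sff(\ega) a^{-1}$ to substitute $\sff(\ega)$ for the corresponding letter $a$ in $w$. Each new edge introduced by the substitution lies in $\dgd$ or is strictly less than $\ega$ in $<$, so the multiset of recursive edges in the path decreases strictly in the well-founded multiset extension of $<$, and the procedure terminates with a closed path at $\ep$ of all degenerate edges. Traversing this path, the running normal form at each vertex either has the edge's letter appended (first degenerate case, $y_h a = y_{ha}$) or has its last letter dropped (second degenerate case, $y_h = y_{ha} a^{-1}$); this is a push/pop sequence on the ``stack'' of normal forms that begins and ends empty. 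A short induction on the number of pops shows that the edge labels along any such path form a word that freely reduces to the empty word, so $w$ is a consequence of $R_\sff$.

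For prefix closure, let $y_g = a_1 \cdots a_n \in \cc$ and let $P_0 = \path(\ep, y_g)$ visit $\ep = g_0, g_1, \ldots, g_n = g$. Applying the same expansion to $P_0$ produces, by well-founded induction on the multiset of recursive edges, a path $P_1$ from $\ep$ to $g$ consisting entirely of degenerate edges. I would then iteratively cancel backtrack pairs in $P_1$: writing $P_1$ as vertices $g_0', \ldots, g_m'$ with edge labels $c_1, \ldots, c_m$, if any step $j$ is a second-case degenerate edge ($y_{g_{j-1}'} = y_{g_j'} c_j^{-1}$) then $j \geq 2$ (the case $j = 1$ would force $y_\ep$ to have length $-1$), and by minimality step $j-1$ must be first-case ($y_{g_{j-1}'} = y_{g_{j-2}'} c_{j-1}$); comparing the final letters of $y_{g_{j-1}'}$ forces $c_{j-1} = c_j^{-1}$, whence $g_{j-2}' = g_j'$ and the two edges form a genuine backtrack that can be deleted while preserving degeneracy of all remaining edges and the endpoints $\ep, g$. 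After finitely many cancellations I obtain a path $P^*$ with only first-case degenerate edges; by the first-case update rule $P^*$ has length $n$ and its label is exactly $y_g$. Since paths in $\ga$ starting at $\ep$ are uniquely determined by their labels, $P^* = P_0$, so every edge of $P_0$ is a first-case degenerate edge and $y_{g_i} = a_1 \cdots a_i \in \cc$ for each $i$.

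The main obstacle is the final identification $P^* = P_0$: a direct case analysis on a single edge of $P_0$ does not close, because a second-case degenerate edge or a recursive edge need not locally contradict the stackability hypotheses. The argument only closes once the entire path is expanded, all backtracks are removed, and uniqueness of labeled paths is invoked.
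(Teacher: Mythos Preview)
Your argument is correct and rests on the same core ingredients as the paper's---the rewriting/expansion driven by the stacking map, termination via well-foundedness, and the observation that on an all-degenerate path each second-case edge is immediately preceded by a cancelling first-case edge. The organization differs, however. The paper runs the rewriting procedure on an \emph{arbitrary} word $w$, freely reduces the output $z$ to a word $w'$, and then proves by a single left-to-right induction along $w'$ that every edge is first-case degenerate; this one pass shows simultaneously that $w'=y_w$, that every prefix of $y_w$ lies in $\cc$, and that $R_\sff$ suffices as defining relators. You instead split the two conclusions: for the presentation you treat only words representing $\ep$ and use a Dyck-word push/pop argument, and for prefix closure you expand the path $P_0$ labeled $y_g$, cancel backtracks down to $P^*$, and invoke label-uniqueness to recover $P^*=P_0$. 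Your multiset-extension termination is arguably cleaner than the paper's finiteness-plus-K\"onig sketch, but your prefix-closure route is more circuitous than necessary: once you know (as your backtrack analysis already establishes) that free reduction leaves only first-case edges, applying this directly to the freely reduced rewriting of $y_g$ immediately gives $y_{a_1\cdots a_i}=a_1\cdots a_i$ for each $i$, without the expand-then-reidentify detour.
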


\begin{proof}
Let $\sff$ be a \stkg\ map associated to the
stackable structure on $G$ over $A$, with
normal form set $\cc$, ordering $<$, and constant $k$.
We begin by defining a relation $<_\sff$ on $\vece$ as follows.
Whenever $e',e$ are both in $\ves$ and $e'$ lies in
the path 
$\path(\init(e),\sff(e))$
in $\ga$ starting at the initial vertex of $e$
and labeled by $\sff(e)$
(and $e'$ is oriented 
in the same direction as this path), 
write $e' <_{\sff} e$.  Let
$<_{\sff}$ be the transitive closure of this relation.
Then $<_{\sff}$ is a subset of the well-founded 
strict partial ordering $<$ from 
Definition~\ref{def:fstkbl}, and so is also
a well-founded strict partial ordering.
Moreover, the constant bound $k$ on the lengths of
words $\sff(e)$
together with
K\"onig's Infinity Lemma imply that $<_{\sff}$ satisfies
the property that for each $e \in \ves$, there are
only finitely many $e'' \in \ves$ with $e'' <_{\sff} e$.

Using the stacking map, we
describe a \sredp\ for 
finding the normal form
for the group element associated to any word, 
by defining a rewriting operation on words over $A$, as follows.

Whenever a word $w \in A^*$ has a decomposition 
$w=xay$ such that $x,y \in A^*$, $a \in A$, and the directed
edge $\xa$ of $\ga$ 
lies in $\ves$,
then we rewrite $w \ra x \sff(\xa) y$.
Now for every directed edge $e'$ 
in the Cayley graph $\ga$ that lies along the path
$\path(\rep(x),\sff(x))$,
either $e'$ is
a degenerate edge in $\dgd$, or else $e' \in \ves$ and $e'<_{\sff} \xa$.
After rewriting a word
$w$ finitely many times $w \ra w_1 \ra \cdots \ra w_i$, 
any decomposition $w_i=x_ia_iy_i$ with $x_i,y_i \in A^*$,
$a_i \in A$, and $e_i:=e_{x_i,a_i} \in \ves$ satisfies
the property that $e_i <_{\sff} e$ where
$e=e_{x,a}$ for some decomposition $w=xay$ of the
original word $w$.
That is, each successive rewriting corresponds
to one of the finitely many edges that are less (with
respect to $<_{\sff}$) than
the finitely many edges in the path labeled $w$ in $\ga$
starting at the identity vertex.
Thus there can be at most finitely many rewritings 
$w \ra w_1 \ra \cdots \ra w_m=z$ 
until a word $z$ is obtained
which cannot be rewritten with this procedure.  
The final step of the \sredp\ is to freely reduce 
the word $z$, resulting in a word $w'$.

Now $w =_G w'$, and the word $w'$ (when input into 
this procedure) is not
rewritten with the \sredp,
since every prefix of $w'$ is equal in $G$ to a prefix of $z$.  
Write $w'=a_1 \cdots a_n$ with
each $a_i \in A$.  Then for all $1 \le i \le n$, the
edge $e_i:=e_{a_1 \cdots a_{i-1},a_i}$ of $\ga$ 
does not lie in $\ves$, and so must be in $\dgd$.  
In the case that $i=1$, this implies that
one of the equalities of words
 $y_\ep a_1 = y_{a_1}$ or $y_{a_1} a_1^{-1} = y_\ep$ must hold
(where $\ep$ denotes the identity of $G$).
Now from Definition~\ref{def:fstkbl} we have
that the normal form of the identity
is the empty word, i.e.~$y_\ep = 1$, and so the first equality
$a_1=y_{a_1}$ must hold.  Assume inductively that
$y_{a_1 \cdots a_i}=a_1 \cdots a_i$.  The inclusion
$e_{i+1} \in \dgd$ implies that either 
$a_1 \cdots a_i \cdot a_{i+1}=y_{a_1 \cdots a_{i+1}}$
or $y_{a_1 \cdots a_{i+1}} a_{i+1}^{-1} = a_1 \cdots a_{i}$.
However, the latter equality on words would imply
that the final letters on each side
are the same, i.e.~$a_{i+1}^{-1}=a_i$, which contradicts
the fact that $w'$ is freely reduced.  Hence we have
that $w'=y_{w'}=y_w$ is in normal form, and moreover every
prefix of $w'$ is also in normal form. 

This \sredp\ uses only relators
of the group lying in the stacking
relation set $R_\sff$ to reduce any word $w \in A^*$
to its normal form.  Hence $R_\sff$ is a set of defining
relators for $G$ over the generating set $A$.
Since the words in $R_\sff$ have 
length at most $k+1$, 
the set $R_\sff$ is also finite.
\end{proof}

We call $\langle A \mid R_\sff \rangle$
the {\em \stkg\ presentation}.
The prefix-closed set $\cc$ of normal
forms for a stackable group 
yields a maximal
tree $\ttt$ in the Cayley graph $\ga$,
namely the set of edges in the paths 
in $\ga$ 
starting at $\ep$ and labeled by the words in $\cc$.
In the following Corollary we show that
a stacking map yields a flow function associated
to this tree.

\begin{corollary}\label{cor:stkbldefs}
A group $G$ is stackable with respect to a
finite symmetric generating set $A$ if and
only if the Cayley graph
$\ga(G,A)$ admits a bounded flow function.
\end{corollary}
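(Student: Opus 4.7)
The plan is to prove each direction by directly constructing the required object from the given one.

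For the forward direction, suppose $G$ is stackable with structure $(\cc,<,k)$. By Proposition~\ref{prop:prefixclosed}, $\cc$ is prefix-closed, so the edges on paths in $\ga$ that start at $\ep$ and are labeled by words of $\cc$ form a maximal tree $\ttt$. A preliminary observation is that $\ega\in\dgd$ if and only if its underlying undirected edge lies in $\ttt$: if $y_g a = y_{ga}$ then $\ega$ sits on the tree path labeled $y_{ga}$, while if $y_g = y_{ga}a^{-1}$ then $e_{ga,a^{-1}}$ sits on the tree path labeled $y_g$; conversely, prefix-closure forces a tree edge to satisfy one of these equalities. I would then choose a stacking map $\sff$ and define $\ff:\vece\to\vec P$ by $\ff(e)=e$ for $e\in\dgd$ and $\ff(e)=\path(\init(e),\sff(e))$ for $e\in\ves$. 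Conditions F1 and F2d are immediate. For F2r, the relation from the excerpt (``$e'$ lies on $\ff(e)$ with both underlying edges outside $\ttt$'') is, by construction of $\sff$, contained in the stackable partial ordering $<$ of Definition~\ref{def:fstkbl}; since $<$ is already a well-founded strict partial ordering, its subset has well-founded transitive closure $<_\ff$. Boundedness follows from the length bound $k$ on the words $\sff(e)$.

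For the reverse direction, suppose $\ff$ is a bounded flow function associated to a maximal tree $\ttt$. For each $g\in G$, let $y_g$ be the label of the unique reduced path in $\ttt$ from $\ep$ to $g$ and set $\cc:=\{y_g:g\in G\}$; this is a prefix-closed set of normal forms containing the empty word $y_\ep=1$. The same local argument as above shows that $\ega\in\dgd$ exactly when its underlying undirected edge lies in $\ttt$. For each edge $\ega$, take $p:=\ff(\ega)$: condition F1 guarantees $p$ connects $g$ to $ga$, boundedness bounds its length by the flow constant $k$, and for each directed edge $e'$ of $p$ either its underlying undirected edge lies in $\ttt$ (so $e'\in\dgd$), or else $e',\ega\in\ves$ and $e'<_\ff\ega$ by F2r. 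Thus $(\cc,\,<_\ff\!\restriction_\ves,\,k)$ is a stackable structure.

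The only real difficulty is the bookkeeping identification of $\dgd$ with the set of directed edges whose underlying undirected edge lies in the tree; once this equivalence is established, F2d matches the degenerate case and F2r matches the recursive case of Definition~\ref{def:fstkbl}, so the two formulations encode the same data. In the forward direction, prefix-closure of $\cc$ is supplied by Proposition~\ref{prop:prefixclosed}, and in the reverse direction it is built in from the start by reading normal forms off of $\ttt$.
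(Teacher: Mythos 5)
Your proposal is correct and follows essentially the same route as the paper's proof: in both directions you build the tree $\ttt$ from the prefix-closed normal forms (via Proposition~\ref{prop:prefixclosed}) or read the normal forms off $\ttt$, identify $\dgd$ with the tree edges, and translate $\sff \leftrightarrow \lbl\circ\ff|_{\ves}$, with F2r matched to the stackable ordering exactly as in the paper. Your explicit verification that degenerate edges coincide with tree edges is a detail the paper only asserts, so it is a welcome addition rather than a deviation.
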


\begin{proof}
First suppose that $G$ is stackable over $A$,
and let $\sff:\ves \ra A^*$ be a stacking map.
From Proposition~\ref{prop:prefixclosed},
the set $\cc$ of normals forms from the 
stackable structure is prefix-closed; let
$\ttt$ be the maximal tree in the Cayley
graph $\ga=\ga(G,A)$ consisting of the
edges lying in paths starting at $\ep$ 
labeled by words in $\cc$. 
The set $\dgd$ of degenerate edges associated
to the normal form set $\cc$
is exactly
the set  of directed
edges lying in this tree, and the edges of
$\ves$ are the edges of $\ga$ that do not lie
in the tree $\ct$. 

Let $\ff:\vece \ra \vec P$ be the function given by
defining $\ff(\ega) :=  \ega$ whenever $\ega \in \dgd$
and defining $\ff(\ega) := \path(g,\sff(\ega))$, the directed path
in $\ga$ with initial vertex $g$
that is labeled by the word $\sff(\ega)$,
whenever $\ega \in \ves$.
That is, 
$\ff|_{\dgd} = id_{\dgd}$
and $\ff|_{\ves} = \path \circ (\init \times \sff)$.
Properties (F1), (F2d), and (F2r) 
of a flow function follow directly from the 
fact that $\sff$ is a stacking map.
The constant $k$ of the stackable structure
is also a bound for this flow function.

Conversely, given a bounded flow function
$\ff:\vece \ra \vec P$ associated to a
maximal tree $\ttt$ in the Cayley graph $\ga(G,A)$,
let $\cc$ be the set of normal forms for 
$G$ over $A$ consisting of the words labeling
paths starting at $\ep$
that are geodesic (i.e.~never backtrack) in $\ttt$.
Let $k$ be the constant bound on $\ff$,
and let $<$ be the restriction of the ordering
$<_\ff$ to $\ves$.  Then $(\cc,<,k)$ with
the stacking map $\sff:=\lbl \circ \ff|_{\ves}$
give a
stackable structure for $G$ over $A$.
\end{proof}

We note that the \sredp\  
described in the proof of Proposition~\ref{prop:prefixclosed},
for finding normal forms
for words, may not be an algorithm.   
To make this process 
algorithmic, we would need to be able to
recognize, given $x \in A^*$ and $a \in A$,
whether or not $\xa \in \ves$, and if so,
be able to find $\sff(\xa)$.  
That is, the 
set 
$$
\{(w,a,\sff(e_{w,a})) \mid e_{w,a} \in \ves\} \cup
\{(w,a,a) \mid e_{w,a} \in \dgd\}
$$
should be computable (i.e., decidable or recursive).
If we let $\ff$ be the flow function associated to
$\sff$ from Corollary~\ref{cor:stkbldefs}, 
then (using the notation from Section~\ref{sec:notation})
this set is the graph $S_\ff$ of the
function $A^* \times A \ra A^*$ given by
$(w,a) \mapsto \lbl(\ff(e_{w,a}))$.
In the case that $S_\ff$ is computable, given any $(w,a) \in A^* \times A$, 
by enumerating the words $z$ in $A^*$ and
checking in turn whether $(w,a,z) \in \alg$, we
can find $\lbl(\ff(e_{w,a}))$.
(Note that 
the set $\alg$ is computable if and only if
the set $\{(w,a,\sff(\wa)) \mid w \in A^*, a \in A, e_{w,a} \in \ves\}$
describing the graph of $\sff$ is computable.  
However, using the latter set in the
reduction algorithm has the drawback of requiring us to 
enumerate the finite (and hence enumerable) set $\hr$,
but we may not have an algorithm to find this
set from the \stkbl\ structure.)  
Hence we have shown the following.

\begin{theorem}\label{thm:solvwp}
If $G$ is \astkbl,
then $G$ has solvable word problem.
\end{theorem}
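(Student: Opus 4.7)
The plan is to convert the \sredp\ from the proof of Proposition~\ref{prop:prefixclosed} into an effective algorithm by exploiting the assumption that $S_\ff$ is recursive. Given an input word $w \in A^*$, the goal is to decide whether $\rep(w)=\ep$. Since $y_\ep = 1$ and Proposition~\ref{prop:prefixclosed} shows that the \sredp\ produces the normal form $y_w$, it suffices to drive $w$ to $y_w$ and test whether $y_w$ is the empty word.

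First, decidability of $S_\ff$ makes the function $(w,a) \mapsto \lbl(\ff(e_{\rep(w),a}))$ computable: because $\ff$ is a function whose output paths have length at most the bound $k$ of the flow function, there is a unique word $z \in A^*$ (in fact with $|z| \le k$) satisfying $(w,a,z) \in S_\ff$, and one finds it by enumerating candidate words $z$ and applying the decision procedure for $S_\ff$ to each triple $(w,a,z)$ in turn.

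Next, implement the \sredp\ algorithmically. Given a current word $u$, iterate through the finitely many decompositions $u = xay$ with $a \in A$, compute $z_{x,a} := \lbl(\ff(e_{\rep(x),a}))$ using the subroutine above, and detect whether $e_{\rep(x),a} \in \ves$ by checking whether $z_{x,a} \ne a$. (By the parenthetical remark following the definition of $\sff$, strictness of the partial ordering forces $\sff(e_{\rep(x),a}) \ne a$ whenever $e_{\rep(x),a} \in \ves$, while degenerate edges have $\ff(e_{\rep(x),a})=e_{\rep(x),a}$ and thus $z_{x,a}=a$.) When some decomposition gives a recursive edge, replace $u$ by $x z_{x,a} y$ and repeat; when no decomposition does, freely reduce the resulting word to obtain $u'$ and halt with the verdict ``$\rep(w)=\ep$'' precisely when $u'$ is the empty word.

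Termination and correctness then follow directly from Proposition~\ref{prop:prefixclosed}: the well-founded ordering $<_\sff$ together with K\"onig's Infinity Lemma bounds the number of rewrites needed, and the resulting $u'$ is $y_w$. The only new content beyond that proposition is the recognition of recursive edges, which is the step whose effectiveness genuinely requires the algorithmic rather than purely existential hypothesis; this is also the point I expect to warrant the most care in a careful write-up, although nothing deeper than enumerate-and-test against $S_\ff$ is involved.
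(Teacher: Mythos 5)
Your proposal is correct and follows essentially the same route as the paper: decidability of $S_\ff$ is used to compute $(w,a)\mapsto\lbl(\ff(e_{w,a}))$ by enumerate-and-test, recursive edges are recognized by comparing the output with $a$, and the \sredp\ of Proposition~\ref{prop:prefixclosed} then runs effectively, terminating by the well-foundedness argument already established there. Nothing essential differs from the paper's argument.
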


As with many other algorithmic classes of groups,
it is natural to discuss formal language theoretic
restrictions on the associated languages, and in
particular on the set of normal forms.
Computability of the set $\alg$ implies
that the set $\cc$ is computable as well 
(since any word $a_1 \cdots a_n \in A^*$ lies
in $\cc$ if and only if the word is freely
reduced and for each $1 \le i \le n$
the tuple $(a_1 \cdots a_{i-1},a_i,a_i)$ lies
in $\alg$).  Many of the examples we consider
in Section~\ref{sec:examples} will satisfy
stronger restrictions on the set $\cc$.

\medskip

Next we turn to a discussion of building 
van Kampen diagrams in \stkbl\ groups.
Before discussing the details of the inductive procedure 
for constructing these diagrams, we first reduce the 
set of diagrams required.

For a group $G$ with symmetrized
presentation $\pp=\langle A \mid R \rangle$,
a {\em filling} is a collection 
$\{\dd_w \mid w \in A^*, w=_G \ep\}$
of van Kampen diagrams for all words representing 
the trivial element.
Given a set 
$\cc=\{y_g \mid g \in G\} \subseteq A^*$ 
of normal forms for $G$,
a {\em \edg} is a van Kampen diagram for a word
of the form $y_gay_{ga}^{-1}$ where
$g \in G$ and $a$ in $A$.  We can associate
this {\edg} with the directed edge
of the Cayley complex $X$ labeled by $a$ with
initial vertex labeled by $g$.
A {\em \nff}~\label{defnff} for the pair $(G,\pp)$ 
consists of a set $\cc$ normal
forms for $G$ that are {\em simple words}
(i.e. words that label simple paths in the 
1-skeleton of the Cayley complex $X$)
including the empty word,
together with a collection
$\{\dd_e \mid e \in E(X)\}$ of {\edg}s, where
for each undirected edge $e$ in $X$, the \edg\ 
$\dd_e$ is associated to one of the two
possible directions of $e$.

Every \nff\ induces a filling, using the ``seashell''
(``cockleshell'' in \cite[Section~1.3]{riley})
method, illustrated in Figure~\ref{fig:seashell}, as follows.
\begin{figure}
\begin{center}
\includegraphics[width=3.6in,height=1.6in]{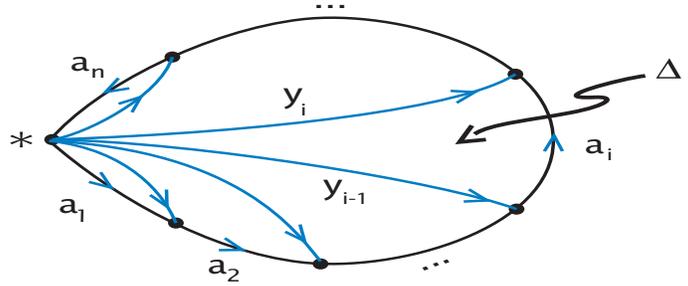}
\caption{Van Kampen diagram built with seashell procedure}\label{fig:seashell}
\end{center}
\end{figure}
Given a word $w=a_1 \cdots a_n$ representing
the identity of $G$, with each $a_i \in A$, then
for each $1 \le i \le n$, there is a \edg\ 
$\dd_i$ in the \nff\ that is associated
to the edge of $X$ with endpoints labeled by the group
elements represented by the words
$a_1 \cdots a_{i-1}$ and $a_1 \cdots a_i$.
Letting $y_i$ denote the normal form in $\cc$ representing 
$a_1 \cdots a_i$,
then the counterclockwise boundary of this diagram
is labeled by either 
$y_{i-1} a_i y_{i}^{-1}$
or $y_{i} a_i^{-1} y_{i-1}^{-1}$;
by replacing $\dd_i$ by its mirror image if necessary, we 
may take $\dd_i$ to have counterclockwise boundary 
word $x_i:=y_{i-1} a_i y_{i}^{-1}$.
We next iteratively build a van Kampen diagram 
$\dd_i'$ for the word 
$y_{\ep}a_1 \cdots a_i y_{i}^{-1}$,
beginning with $\dd_1':=\dd_1$.  For $1<i \le n$, 
the planar diagrams $\dd_{i-1}'$ and $\dd_{i}$
have boundary subpaths
sharing a common label $y_{i}$.
The fact that this word $y_i$ is simple, labeling
a simple path in $X$, 
implies that the paths in the van Kampen
diagrams $\dd_{i-1}',\dd_{i}$ labeled by $y_i$ 
must also be simple, since the path in $X$
is the image under the cellular maps
$\pi_{\dd_{i-1}'}$ and $\pi_{\dd_i}$ of these 
boundary paths.
Hence each of these boundary paths labeled $y_i$
is an embedding in the respective van Kampen diagram.
These paths are also oriented in the same direction,
and so the diagrams $\dd_{i-1}'$ and $\dd_{i}$ can be
glued, starting at their basepoints and
folding along these subpaths,
to construct the 
planar diagram $\dd_i'$.  
Performing these gluings consecutively for each
$i$ results in a van Kampen diagram $\dd_n'$ with
boundary label $y_{\ep}wy_{w}^{-1}$.
Note that we have allowed the
possibility that some of the boundary edges of $\dd_n'$
may not lie on the boundary of a 2-cell in $\dd_n'$;
some of the words $x_i$ may freely
reduce to the empty word, and the corresponding
van Kampen diagrams $\dd_i$ may have no 2-cells.
Note also that
the only simple word representing the identity of $G$
is the empty word; that is, $y_\ep =y_{w}=1$.  
Hence $\dd_n'$ is the
required van Kampen diagram for $w$.

Starting from a bounded flow function
$\ff$  for a stackable group
$G$ over a finite generating set $A$, 
we now describe the {\em \srecp}, which is
an inductive procedure for
constructing a filling for $G$ over the \stkg\ 
presentation $\pp=\langle A \mid R_\sff \rangle$ 
by building a \nff\ to which the seashell
method can be applied, as follows.  
Let $X$ be the Cayley complex of this presentation,
and let $\cc$ be the normal form set
obtained from the maximal tree $\ttt$ 
associated to $\ff$.
Since these normal forms label geodesics in
a tree, 
each element $w$
of $\cc$ must be a  
simple word, i.e.~labeling
a simple path in $X$.


We define a \edg\  corresponding to each
directed edge in $\vece=\ves \cup \dgd$ of the
Cayley graph as follows.
Let $e$ be an edge in $\vece$, oriented
from a vertex $g$ to a vertex $h$ and labeled by $a \in A$,
and let $w_e:=y_g a y_{h}^{-1}$.

In the case that $e$ lies in $\dgd$, the
word $w_e$ freely reduces to the empty
word.  Let $\dd_e$ be the van Kampen diagram for 
$w_e$ consisting of a line segment of edges,
with no 2-cells.
(See Figure~\ref{fig:degenstkg}.)
\begin{figure}
\begin{center}
\includegraphics[width=2.3in,height=0.7in]{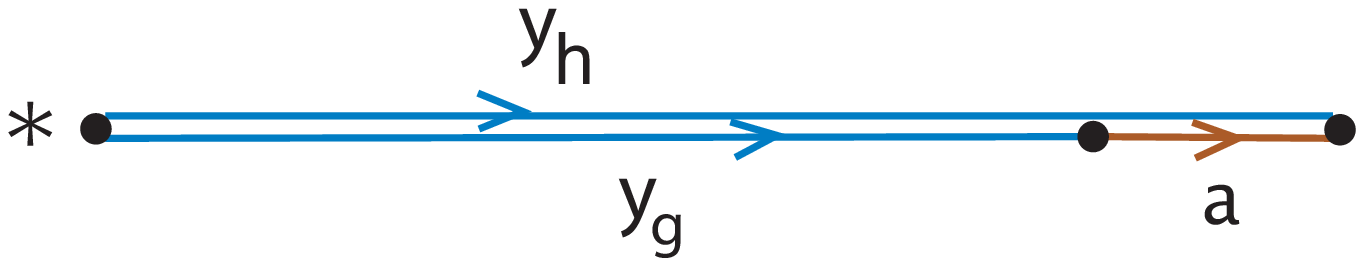}
\hspace{.2in}
\includegraphics[width=2.3in,height=0.7in]{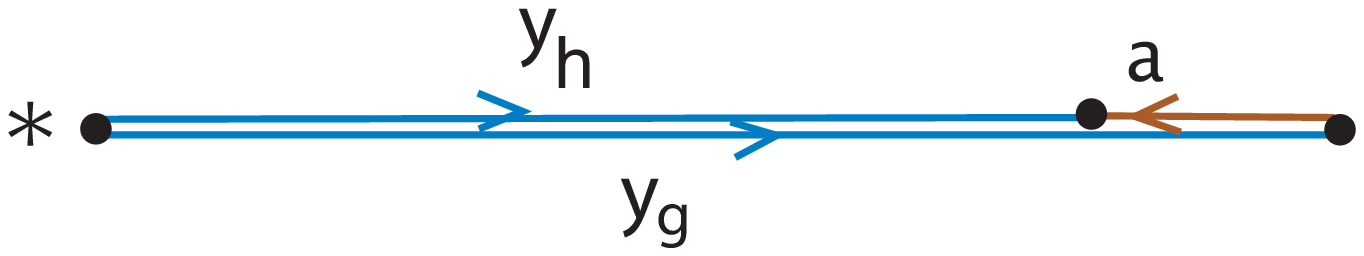}
\caption{Van Kampen diagram $\dd_e$ for degenerate edge $e$}\label{fig:degenstkg}
\end{center}
\end{figure}

In the case that $e \in \ves$, we
use Noetherian induction
to construct the \edg.
Write $\lbl(\ff(e))=a_1 \cdots a_n$ with each $a_i \in A^*$, and for each
$1 \le i \le n$, let $e_i$
be the edge in the Cayley graph from $ga_1 \cdots a_{i-1}$
to $ga_1 \cdots a_i$ labeled by $a_i$.  
For each $i$, either the directed edge $e_i$ is
in $\dgd$, or else $e_i \in \ves$ and 
$e_i <_{\ff} e$; in both cases we
have, by above or by Noetherian induction,
a van Kampen diagram $\dd_i:=\dd_{e_i}$ with boundary label
$y_{ga_1 \cdots a_{i-1}} a_i y_{ga_1 \cdots a_i}^{-1}$.
By using the ``seashell'' method, we successively 
glue the diagrams $\dd_{i-1}$, $\dd_i$ along their
common boundary words $y_{ga_1 \cdots a_{i-1}}$.
Since all of these gluings are along simple paths, 
this results in a planar van Kampen diagram $\dd_e'$ with boundary
word $y_g \lbl(\ff(e)) y_{h}^{-1}$.  
(Note that by our assumption that no generator represents
the identity, $\lbl(\ff(e))$ must contain at least one letter.)
Finally, glue a polygonal 2-cell with boundary
label given by the relator $\lbl(\ff(e))a^{-1}$ 
along the boundary subpath labeled $\lbl(\ff(e))$ in $\dd_e'$, in order to 
obtain the diagram $\dd_e$ with boundary 
word $w_e$.  Since in this step we have glued a disk onto $\dd_e'$
along an arc, the diagram $\dd_e$ is again planar, and is a \edg\ 
corresponding to $e$.
(See Figure~\ref{fig:genericstacking}.)
\begin{figure}
\begin{center}
\includegraphics[width=3.6in,height=1.6in]{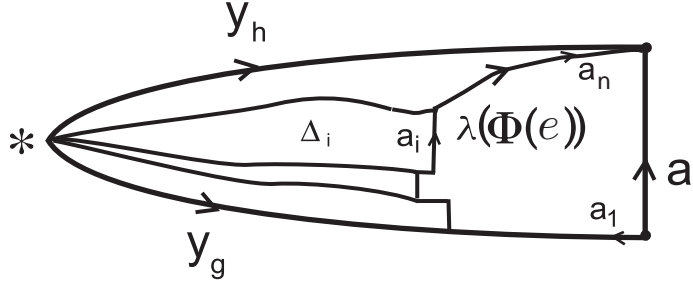}
\caption{Van Kampen diagram $\dd_e$ for recursive edge $e$}\label{fig:genericstacking}
\end{center}
\end{figure}

The penultimate step of the \srecp\  
is to eliminate repetitions in order
to obtain the \nff\ associated to
the flow function. 
Given any undirected edge $e$ in $E(X)$, choose 
$\dd_e$ to be a \edg\ constructed above for one of the
orientations of $e$.
Then the collection $\cc$ of normal forms,
together with the collection 
$\{\dd_e \mid e \in E(X)\}$
of {\edg}s, is a \nff\ for the \fstkbl\ group $G$.
Finally, we apply the seashell method again
to obtain a filling.


\begin{remark}\label{rmk:anfhasnf}
{\em
The \nff\ and filling constructed by the
\srecp\ satisfy another useful property:
}
For every van Kampen diagram $\dd$ in the filling and 
every vertex $v$ in $\dd$,
there is an edge path in $\dd$ from the basepoint $*$ to $v$
labeled by the normal form in $\cc$ for the element
$\pi_{\dd}(v)$ in $G$.  
\end{remark}

As with the earlier \sredp, we have an 
algorithm in the case that the set $\alg$ is computable. 

\begin{proposition}\label{prop:fillalg}
If $G$ is \afstkbl\ over the finite 
generating set $A$, then the \srecp\ is an inductive
algorithm which, upon input of a word $w \in A^*$
that represents the identity in $G$, will construct a van 
Kampen diagram for $w$ over the \stkg\ presentation.
\end{proposition}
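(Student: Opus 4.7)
The plan is to check that every step of the \srecp, as described in the paragraphs preceding the proposition, becomes effective once the set $\alg$ is decidable. Algorithmic stackability says exactly that $\alg$ is recursive, so by enumerating words $z \in A^*$ in shortlex order and testing membership $(w,a,z) \in \alg$ we obtain an algorithm that, on input $(w,a) \in A^* \times A$, outputs $\lbl(\ff(e_{\rep(w),a}))$. This is the key computational primitive; everything else is built from it.

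From this primitive I would first assemble two auxiliary algorithms. The first decides, given $w \in A^*$ and $a \in A$, whether $e_{\rep(w),a}$ is degenerate or recursive: compute $\lbl(\ff(e_{\rep(w),a}))$ and check whether it equals the single letter $a$ (degenerate) or not (recursive, in which case the primitive has already returned $\sff(e_{\rep(w),a})$). The second, on input $w \in A^*$, computes the normal form $y_w \in \cc$ by running the \sredp\ from the proof of Proposition~\ref{prop:prefixclosed}, scanning $w$ for a letter whose edge is recursive, applying the rewrite $xay \ra x\sff(\xa)y$ found via the primitive, and then freely reducing once no further rewrite applies. Termination of this loop was already established in that proof.

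With these in hand I would implement the recursive construction of $\dd_e$ for a directed edge $e=\ega$. If $e \in \dgd$, compute $y_g$ and $y_{ga}$ and output the line-segment diagram $y_g a y_{ga}^{-1}$ with no 2-cells. If $e \in \ves$, use the primitive to compute $\sff(e)=a_1 \cdots a_n$ with $n\le k$; for each $1\le i\le n$ determine whether the edge $e_i$ from $ga_1\cdots a_{i-1}$ to $ga_1\cdots a_i$ is degenerate or recursive and recurse to build $\dd_{e_i}$; then glue $\dd_{i-1}'$ to $\dd_i$ along the common simple boundary subpath labeled $y_{ga_1\cdots a_{i-1}}$ (a concrete combinatorial identification, since the normal forms have been computed explicitly), and finally attach a 2-cell whose boundary word is the relator $\sff(e)a^{-1}\in R_\sff$. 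Applying this subroutine to each edge along the input word $w=a_1\cdots a_n$ and running one more round of seashell gluing produces the desired van Kampen diagram over $\langle A\mid R_\sff\rangle$.

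The main obstacle is termination of the recursion used to build $\dd_e$ for recursive edges, and this is precisely the content of the well-foundedness observation already proved in Proposition~\ref{prop:prefixclosed}: the bound $k$ on the length of $\sff$-words together with K\"onig's Infinity Lemma guarantee that for each $e\in\ves$ only finitely many edges satisfy $e''<_\sff e$, so the Noetherian recursion halts after finitely many calls. Once termination is secured, every remaining step is either a finite combinatorial manipulation (gluing along simple subpaths, whose embeddedness is guaranteed by the seashell argument) or a call to one of the auxiliary algorithms above, so the entire \srecp\ is an algorithm.
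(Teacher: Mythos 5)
Your proposal is correct and follows essentially the same route as the paper, which simply observes that the stacking recursion becomes an algorithm once membership in $S_\ff$ is decidable: the enumerate-and-test primitive for $\lbl(\ff(e_{w,a}))$, the degenerate/recursive test (valid since $\sff(e)$ is never the single letter $a$), the normal-form computation via the reduction procedure, and termination via well-foundedness of $<_\sff$ are exactly the ingredients the paper relies on. Your write-up just makes these steps explicit, so there is nothing to correct.
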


Although the \sredp\ (from the proof of
Proposition~\ref{prop:prefixclosed}) for finding 
normal forms for a \stkbl\ group can also be used to describe the 
van Kampen diagrams in this filling, 
it is this inductive view that connects
 more directly to the algorithms for solving the
word problem and building van Kampen diagrams 
in the cases of almost convex groups and
groups with finite complete rewriting systems.


\begin{remark} {\em
For finitely generated groups that are not finitely
presented, the concept of stackability can still 
be defined, although in this case 
it makes sense to discuss \stkg\ maps in terms
of a presentation for $G$, to avoid the (somewhat
degenerate) case in
which every relator is included in the presentation.
A group $G$ with symmetrized presentation
$\pp=\langle A \mid R\rangle$ is } $\pp$-\stkbl\  
{\em if there is a bounded flow function $\ff$
for a maximal tree in $\ga(G,A)$
satisfying the condition that the \stkg\ relation
set $R_{\lbl \circ \ff}$ is a subset of $R$. 
Although we do not consider $\pp$-\stkbl\ groups
further in this paper, we note here
that the \sredp\ for finding normal forms and the
inductive method for constructing van Kampen diagrams
over the presentation $\langle A \mid R_{\lbl \circ \ff}\rangle$ 
of $G$ (and hence over $\pp$) described above
still hold in this more general setting.  
}
\end{remark}


\vspace{.1in}


\section{Examples of stackable groups}\label{sec:examples}



\subsection{Illustration: Solvable Baumslag-Solitar groups}\label{subsec:bs}


$~$

\vspace{.1in}

The solvable Baumslag-Solitar groups are presented by  
$G=BS(1,p)=\langle a,t \mid tat^{-1}=a^p \rangle$ with $p \in \Z$.
A set of normal forms over the generating set
$A=\{a,a^{-1},t,t^{-1}\}$ is given by
$$\cc:=\{t^{-i}a^mt^k \mid i,k \in \N \cup \{0\},~m \in \Z,
\text{ and either } p \not| m \text{ or } 0 \in \{i,k\}\}.$$
The recursive edges in $\ves$
are the directed edges in the Cayley graph $\ga(G,A)$
of the form $e_{w,b}$
with initial vertex labeled $w$ and edge label $b \in A$
satisfying one of the following:
\begin{enumerate}
\item $w=t^{-i}a^m$ and $b=t^\eta$ with $m \neq 0$, $\eta \in \{\pm 1\}$,
and $-i+\eta \le 0$, or
\item $w=t^{-i}a^mt^k$ and $b=a^\eta$ with $k>0$ and $\eta \in \{\pm 1\}$.
\end{enumerate}
%
We define a function $\sff:\ves \ra A^*$ by
$\sff(e_{t^{-i}a^m,t^{\eta}}):=(a^{-\nu p}ta^{\nu})^\eta$
in case (1), 
where $\nu:=\frac{m}{|m|}$ is 1 if $m>0$ and $-1$ if $m<0$, and
$\sff(e_{t^{-i}a^mt^k,a^{\eta}}):=t^{-1}a^{\eta p}t$
in case (2).  
Moreover let $\ff:\vece \ra \vec P$ be defined by
$\ff|_{\dgd} = id_{\dgd}$
and $\ff|_{\ves} = \path \circ (\init \times \sff)|_{\ves}$.

A portion of the Cayley graph and
the corresponding function $\ff$ in the case that $p=2$
are illustrated in Figure~\ref{fig:bs12};
here thickened edges are degenerate, and the images $\ff(e)$
for two recursive (dashed) edges $e$ are shown with widely dashed paths.
The double arrows within 2-cells indicate the direction of
the flow through the Cayley complex toward the basepoint
given by the function $\ff$.
\begin{figure}
\begin{center}
\includegraphics[width=2.6in,height=2.2in]{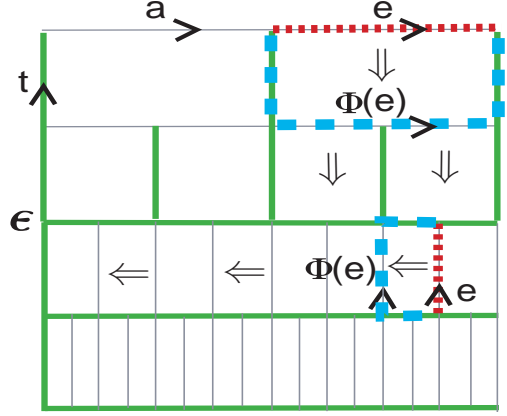}
\caption{Flow function for $BS(1,2)$}\label{fig:bs12}
\end{center}
\end{figure}

Properties (F1) and (F2d) of the definition of
a flow function, as well as boundedness, 
follow directly.  To show that $\ff$
also satisfies property (F2r), we first briefly describe the 
Cayley complex $X$ for the finite presentation above; see
for example \cite[Section~7.4]{echlpt} for more details.
The Cayley complex $X$ is homeomorphic to the product
$\R \times T$ of the real line with a regular tree $T$,
and there are projections 
$\Pi_\R:X \ra \R$ and $\Pi_T:X \ra T$.
The projection $\Pi_T$ takes each edge labeled by 
an $a^{\pm 1}$ to a vertex of $T$.  Each edge of $T$ is
the image of infinitely many $t$ edges of the 1-skeleton $X^1$, with
consistent orientation, and so we may consider the
edges of $T$ to be oriented and labeled by $t$, as well.
For the normal form $y_g=t^{-i}a^mt^k \in \cc$ of an element
$g \in G$, the projection onto $T$ of the path in $X^1$ starting at
$\ep$ and labeled by $y_g$ is the unique geodesic
path, labeled by $t^{-i}t^k$,
in the tree $T$ from $\Pi_T(\ep)$ to $\Pi_T(g)$. 
For any directed edge $e$ in 
$\ves$ in case (2) above, 
there are
$p+1$ 2-cells in the Cayley complex $X$ 
that contain $e$ in their boundary,
and the path $\ff(e)$ 
is the portion 
of the boundary, disjoint from $e$, 
of the only one of those 2-cells $\sigma$ that satisfies 
$d_T(\Pi_T(\ep),\Pi_T(q)) \le d_T(\Pi_T(\ep),\Pi_T(e))$
for all points $q \in \sigma$, where $d_T$
is the path metric in $T$.  For any edge $e'$ that
lies both in this $\ff(e)$ path and in $\ves$, then
$e'$ is again a recursive edge of type (2), and we
have $d_T(\Pi_T(\ep),\Pi_T(e'))<d_T(\Pi_T(\ep),\Pi_T(e))$.  
Thus the well-ordering on 
$\N$ applies, to show that the relation $<_\ff$ is
a well-founded strict partial ordering on the case (2)
edges in $\ves$.

The other projection map $\Pi_\R$ takes each vertex $t^{-i}a^mt^k$
to the real number $p^{-i}m$, and so takes each edge
labeled by $t^{\pm 1}$ to a single real number, and takes
each edge labeled $a^{\pm 1}$ to an interval in $\R$.
For an edge $e \in \ves$ in case (1) above, there are
exactly two 2-cells in $X$ containing $e$, and
the path $\ff(e)$ 
starting at the initial vertex $w=t^{-i}a^m$ of $e$
travels around the boundary of the one of these
two cells (except for the edge $e$)
whose image, under the projection $\Pi_\R$,
is closest to $0$.  The only possibly recursive edge 
$e'$ in the path $\ff(e)$
must also have type (1), and moreover
the initial vertex of $e'$ is $w'=t^{-i}a^{m-\nu}$ and
satisfies $|\Pi_\R(w')|=|\Pi_\R(w)|-p^{-i}$.
Then in all cases
the relation $<_\ff$ is a well-founded strict partial ordering,
completing the proof of property (F2r).
Therefore the function $\ff$ is a bounded flow function,
and the symmetrization of the
presentation above is the \stkg\ presentation.


\subsection{Groups admitting complete
     rewriting systems}\label{sec:rs}


$~$

\vspace{.1in}


A {\em finite complete rewriting system}
(finite {\em CRS}) for a group $G$ consists of a finite set $A$
and a finite set of ``rules'' $R \subseteq A^* \times A^*$
(with each $(u,v) \in R$ written $u \ra v$)
such that 
as a monoid, $G$ is presented by 
$G = Mon\langle A \mid u=v$ whenever $u \ra v \in R \rangle,$
and the rewritings
$xuy \ra xvy$ for all $x,y \in A^*$ and $u \ra v$ in $R$ satisfy:
(1) Each $g \in G$ is 
represented by exactly one {\em irreducible} word 
(i.e. word that cannot be rewritten)
over $A$, and
(2) the relation on $A^*$ defined by
$x>y$ whenever $x \rightarrow
x_1 \rightarrow ... \rightarrow x_n \rightarrow y$ is 
a well-founded strict partial ordering.
(That is, there is no infinite chain $w \ra x_1 \ra x_2 \ra \cdots$
of rewritings.)

Given any finite CRS $(A,R)$ for $G$, there is another
finite CRS $(A,R')$ for $G$ with the same
set of irreducible words such that the CRS is {\em minimal}.  
That is, for each $u \ra v$ in $R'$, the word $v$
and all proper subwords of the word $u$ 
are irreducible (see, for example, \cite[p.~56]{sims}).
If there is a letter $a \in A$ with $a=_G 1$, then
the rewriting $a \ra 1$ must be an element of $R'$,
and  for all other $u \ra v \in R'$, the letter $a$
cannot appear in the words $u$ or $v$.
Let $A'$ be the set $A$ with all letters representing
the identity of $G$ removed, and let $R''$ be the
set $R'$ with all rules of the form $a \ra 1$ for
$a \in A \setminus A'$ removed.  Now $(A',R'')$ is
also a minimal finite CRS for $G$ over $A$ with
the same set of irreducible words.
Next let $A''$ be the closure of $A'$ under inversion.
For each letter $a \in A'' \setminus A'$,
there is an irreducible word $z_a \in A^*$ with $a =_G z_a$.
Let $R''':=R'' \cup \{a \ra z_a \mid a \in A' \setminus A\}$.
Then  $(A'',R''')$ is again a minimal finite CRS for $G$,
and with the same set of irreducible normal forms
as the original CRS $(A,R)$.
For the remainder of this paper, we will assume that
all of our complete rewriting systems are minimal
and have an inverse-closed alphabet that does not contain
a representative~of~$\ep$.


Given any word $w \in A^*$, we write
$w \ras w'$ if there is any sequence of rewritings
$w=w_0 \ra w_1 \ra \cdots \ra w_n=w'$ (including
the possibility that $n=0$ and $w'=w$).
A {\em prefix rewriting}
of $w$ with respect to the complete rewriting system $(A,R)$
is a sequence of rewritings $w=w_0 \ra \cdots \ra w_n=w'$,
written $w \prs w'$,
such that at each $w_i$,
the shortest possible reducible prefix is rewritten to obtain
$w_{i+1}$.  
When $w_n$ is irreducible,
the number $n$ is the {\em prefix rewriting length} of $w$,
denoted $prl(w)$.

In Theorem~\ref{thm:crsrecit}, we 
apply ideas developed  in the 
construction of a 1-combing
associated to a finite complete rewriting
system by Hermiller and Meier 
in~\cite{hmeiertcacrs}, in order
to build a stackable structure from a finite CRS.   

\begin{theorem}\label{thm:crsrecit}
A group admitting a
finite complete rewriting system
is \astkbl.
\end{theorem}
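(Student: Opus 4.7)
The plan is to build the stackable structure from a single step of prefix rewriting, following the strategy of Hermiller and Meier~\cite{hmeiertcacrs}. Let $(A, R)$ be the given finite complete rewriting system for $G$; by the preliminary reductions described just before this theorem, I may assume that $(A, R)$ is minimal, that $A$ is inverse-closed, and that no letter represents $\epsilon$ (so every rule $aa^{-1} \to 1$ lies in $R$). Take $\cc$ to be the set of irreducible words over $A$. By confluence, $\cc$ is a set of normal forms for $G$; it contains the empty word and is prefix-closed, since any rule that rewrites a prefix of a word also rewrites the word itself.

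To construct a stacking map, fix a recursive edge $e = e_{g,a}$. Every prefix of $y_g$ is irreducible, but $y_g a$ itself must be reducible (otherwise $y_g a = y_{ga}$, contradicting $e \in \ves$). Hence every subword of $y_g a$ that is the left side of a rule in $R$ must contain the final letter $a$, so it is a suffix of $y_g a$; by minimality of $R$, exactly one such rule $w_1 a \to v_1$ exists. Writing $y_g = x w_1$, define $\sff(e) := w_1^{-1} v_1$. Since $R$ is finite, $|\sff(e)| \le 2M - 1$, where $M$ is the maximum length of a left side in $R$. The path $\path(g, \sff(e))$ first slides back along $w_1^{-1}$ through consecutive prefixes of $y_g$, all of which lie in $\cc$, so these edges lie in $\dgd$; it then proceeds forward along $v_1$ from $\rep(x)$ to $\rep(x v_1) = \rep(x w_1 a) = ga$.

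For the well-founded order, set
\[
 h(w) := \text{the maximum length of a rewriting sequence starting at } w,
\]
which is finite for every $w$ by K\"onig's Infinity Lemma applied to the finitely branching, terminating rewriting tree rooted at $w$, and which satisfies $h(w) > h(w')$ whenever $w \to w'$. Define $<$ on $\ves$ by $e < e'$ iff $h(y_{\init(e)} \cdot a(e)) < h(y_{\init(e')} \cdot a(e'))$. The main step, which I expect to be the chief technical obstacle, is to verify that every recursive edge $e_i$ in the $v_1$ segment of $\sff(e)$ satisfies $e_i < e$. Writing $v_1 = b_1 \cdots b_m$ and $h_i := \rep(x b_1 \cdots b_{i-1})$, this requires $h(y_{h_i} b_i) < h(y_g a)$. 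The argument chains three observations: (i) $y_g a = x w_1 a \to x v_1$ is one rewriting step; (ii) any rewriting sequence $x b_1 \cdots b_{i-1} \ras y_{h_i}$ lifts into $x v_1$ by acting only on the prefix, yielding $x v_1 \ras y_{h_i} b_i \cdots b_m$; (iii) any rewriting sequence from $y_{h_i} b_i$ acts on a subword of $y_{h_i} b_i$, hence applies verbatim to $y_{h_i} b_i \cdots b_m$ without touching the tail $b_{i+1} \cdots b_m$, so $h(y_{h_i} b_i \cdots b_m) \ge h(y_{h_i} b_i)$. Chaining these through the monotonicity of $h$ under rewriting produces the desired strict inequality.

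For algorithmic stackability, the set $S_\ff$ associated via Corollary~\ref{cor:stkbldefs} to the flow function built from $\sff$ is recursive: given $(w, a) \in A^* \times A$, one reduces $w$ to $y_g$ by iterated rule application (decidable, as $R$ is finite); then tests whether $y_g$ ends in $a^{-1}$, or whether $y_g a$ is irreducible, these being the two degenerate conditions under which $\lbl(\ff(e_{\rep(w),a})) = a$; and otherwise scans $R$ for the unique rule whose left side is a suffix of $y_g a$, outputting $w_1^{-1} v_1$.
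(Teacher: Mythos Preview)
Your proof is correct and follows essentially the same construction as the paper: the stacking map $\sff(e_{g,a}) = w_1^{-1}v_1$ coming from the unique rule $w_1 a \to v_1$ applicable at the end of $y_g a$ is exactly what the paper uses. The one difference is the termination measure: the paper orders recursive edges by the \emph{prefix rewriting length} $prl(y_g a)$, observing directly that the (deterministic) prefix rewriting of $y_{h_i}b_i$ is a proper tail segment of the prefix rewriting of $y_g a$, whereas you use the maximal rewriting length $h$ and a lifting argument, which costs you an appeal to K\"onig's Lemma to make $h$ well-defined. Both work; the paper's choice is slightly leaner because $prl$ is already finite by termination alone and is tailored to the specific rewriting that defines $\sff$.
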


\begin{proof}
Let $\cc=\{y_g \mid g \in G\}$ be the set of irreducible words from 
a minimal
finite CRS $(A,R)$ for a group $G$.
Then $\cc=A^* \setminus \cup_{u \ra v \in R}A^*uA^*$.
Note that prefixes of irreducible
words are also irreducible, and so $\cc$ is a 
prefix-closed set of normal 
forms for $G$ over $A$.

Let $\ga$ be the Cayley graph for the pair $(G,A)$.
As usual, the directed edge $\ega$ in $\ga$
with label $a$
and initial vertex $g$ lies in the set
$\dgd$ of degenerate edges if and only if
$y_gay_{ga}^{-1}$ freely reduces to the empty word,
which in turn holds if and only
if the undirected edge underlying $\ega$ lies 
in the tree of edges in paths from $\ep$ labeled by
words~in~$\cc$.


Define a function $\ff:\vece \ra \vec P$ as follows.
On degenerate edges, $\ff|_{\dgd} := id_{\dgd}$. 
Given a recursive edge $\ega \in \ves$,  the word
$y_ga$ is reducible, and
since $y_g$ is
irreducible, the shortest reducible prefix of
$y_ga$ is the entire word.
Minimality of the rewriting system $R$ implies that
there is a unique factorization $y_g=w\tilde u$ 
such that $\tilde ua$ is the left
hand side of a unique rule $\tilde ua \ra v$ in $R$; that is,
$y_ga \ra wv$ is a prefix rewriting.
Then define $\ff(\ega):=\path(g,\tilde u^{-1}v)$.

Properties (F1) and (F2d) of the definition of flow function are 
immediate.  To check property (F2r), 
we first let $p$ be the path $\ff(\ega)$ above
labeled $\lbl(\ff(\ega))=\tilde u^{-1}v$
for $\ega \in \ves$.
Since the word $\tilde u$ is a suffix of the normal form $y_g$,
then the edges in the path $p$
that correspond to the letters in 
$\tilde u^{-1}$ all lie in the set $\dgd$ of degenerate edges.  
For each directed edge $e'$ in the subpath of $p$ labeled by $v$,
either $e'$ also lies in $\dgd$, or else $e' \in \ves$ and
there is a factorization $v=v_1a'v_2$ so that
$e'$ is the directed edge along $p$ corresponding 
to the label $a' \in A$.
In the latter case, if we denote
the initial vertex of $e'$ by $g'$, then
the prefix rewriting sequence
from $y_{g'}a'v_2$ to its irreducible form
is a (proper) subsequence of the prefix
rewriting of $y_ga$.  
That is, if we define a function
$prl:\ves \ra \N$ by $prl(e_{h,b}):=prl(y_hb)$
whenever $e_{h,b} \in \ves$,
we have $prl(e')<prl(\ega)$.
Hence the ordering $<_{\ff}$ corresponding to our
function $\ff:\vece \rightarrow \vec P$ satisfies 
the property that $e'<_{\ff} e$ implies $prl(e')<prl(e)$,
and the well-ordering property on $\N$ implies that
$<_{\ff}$ is a well-founded strict partial ordering.
Thus (F2r) holds as well, and $\ff$ is a flow function.

The image set $\lbl(\ff(\vece))$ is the set
of words $A \cup \{\tilde u^{-1}v \mid \exists a \in A$
with $\tilde u a \ra v$ in $R\}$.  Thus 
boundedness of $\ff$
follows from finiteness of the sets $A$ and $R$
of generators and rules in the rewriting system.
Then Corollary~\ref{cor:stkbldefs} shows that
$G$ is \stkbl\ over the generating set $A$.

To determine whether a tuple $(w,a,x)$ (where $w,x \in A^*$
and $a \in A$) lies in the 
associated set 
$\alg$, we begin by computing the normal forms
$y_w$ and $y_{wa}$ from $w$ and $wa$, using the
rewriting rules of our finite system.
Then $(w,a,x) \in \alg$ if and only if
either at least one of the words 
$y_wa$ and $y_{wa}a^{-1}$ is irreducible 
and $a=x$, or
else both of the words $y_wa$ and $y_{wa}a^{-1}$ are 
reducible and there exist a factorization
$y_w=z\tilde u$ for some
$z \in A^*$ and a rule $\tilde ua \ra v$ in $R$ such that
$x=\tilde u^{-1}v$.  Since there are only
finite many rules in $R$ to check for such a
decomposition of $y_w$,
it follows that the set $\alg$ is 
also computable, and
so this stackable structure  is algorithmic.
\end{proof}



\subsection{Thompson's group $F$}\label{subsec:f}


$~$

\vspace{.1in}

Thompson's group 
\[F=\langle x_0,x_1 \mid [x_0x_1^{-1},x_0^{-1}x_1x_0],
   [x_0x_1^{-1},x_0^{-2}x_1x_0^2] \rangle\]
is the group of orientation-preserving piecewise linear
homeomorphisms of the unit interval [0,1], satisfying that
each linear piece has a slope of the form $2^i$ for some
$i \in \Z$, and all breakpoints occur in the 2-adics.
In \cite{chst}, Cleary, Hermiller, Stein, and Taback
effectively show that Thompson's group with the generating set
$A=\{x_0^{\pm 1},x_1^{\pm 1}\}$ is \fstkbl, 
with \stkg\ presentation given by 
the symmetrization of the presentation above. 
Moreover, in~\cite[Definition~4.3]{chst} they give an 
algorithm for computing the \stkg\ map, which 
can be used to show that $F$ is \astkbl.

Although we will not repeat their proof here, 
we describe 
the normal form set $\cc$ associated to the \stkbl\ structure 
constructed for Thompson's group in~\cite{chst}
in order to
discuss its formal language theoretic properties.
Given a word $w$ over the generating set 
$A=\{x_0^{\pm 1},x_1^{\pm 1}\}$, denote the number of occurrences in $w$ of the letter
$x_0$ minus the number of occurrences in $w$ of the letter $x_0^{-1}$ by $expsum_{x_0}(w)$;
that is, the exponent sum for $x_0$.
The authors of that paper show (\cite[Observation~3.6(1)]{chst}) that the set 

\smallskip

$\cc:=\{w \in A^* \mid$ for all $\eta \in \{\pm 1\}$, the words
$x_0^\eta x_0^{-\eta}$, $ x_1^\eta x_1^{-\eta}$, and $x_0^2x_1^\eta$ 
are  not 

\hspace{1in} subwords of $w$, and for all prefixes $ w' $ of $ w,
expsum_{x_0}(w') \le 0\}$,

\smallskip


\noindent is a set of normal forms for $F$.  
Moreover, each of these words
labels a (6,0)-quasi-geodesic path in the
Cayley complex $X$~\cite[Theorem~3.7]{chst}.

This set $\cc$ is the intersection of the regular language
$A^* \setminus \cup_{u \in U}A^*uA^*$, where 
$U:=\{x_0x_0^{-1},x_0^{-1}x_0,x_1x_1^{-1},x_1^{-1}x_1,x_0^2x_1,x_0^2x_1^{-1}\}$,
with the language $L:=\{w \in A^* \mid$ for all prefixes $ w' $ of $ w,
expsum_{x_0}(w') \le 0\}$.  
We refer the reader to the text of Hopcroft and Ullman~\cite{hu}
for definitions and results on context-free and regular languages we now
use to analyze the set $L$.
The language $L$ can be recognized by a deterministic push-down automaton (PDA)
which pushes an $x_0^{-1}$ onto its stack whenever
an $x_0^{-1}$ is read, and pops an $x_0^{-1}$ off of its stack whenever
an $x_0$ is read.  When $x_1^{\pm 1}$ is read, the PDA
does nothing to the stack, and does not change its state.
The PDA remains in its initial state unless an $x_0$ is read 
when the only symbol on 
the stack is the stack start symbol $Z_0$, in which case
the PDA transitions to a fail state (at which it must then
remain upon reading the remainder of the input word).  
Ultimately the PDA accepts
a word whenever its final state is its initial state.
Consequently, $L$ is a deterministic context-free language.  
Since the intersection of a regular language with a deterministic
context-free
language is deterministic context-free, 
the set $\cc$ is also a deterministic context-free language.

In Section~\ref{sec:rs}, the normal form set 
of the stackable structure for a group with a finite
complete rewriting system, 
$\cc=A^* \setminus \cup_{u \ra v \in R}A^*uA^*$,
is a regular language.  In consideration of the
open question~\cite{thompsonpbms} of whether
or not Thompson's group $F$ has a finite complete
rewriting system, it would also be of interest
to know whether or not Thompson's group $F$
is \stkbl\ with respect to a regular language
of normal forms.


\subsection{Almost convex groups}\label{subsec:ac}


$~$

\vspace{.1in}

Let $G$ be a group with an
inverse-closed generating set $A$, and
let $d=d_\ga$ be the path metric on the associated
Cayley graph $\ga$.  For $n \in \N$, 
define the sphere $S(n)$ of radius $n$
to be the set of points in $\ga$ a distance
exactly $n$ from the vertex labeled by the
identity $\ep$, and define the ball $B(n)$ of radius $n$
to be the set of points in $\ga$ whose path metric distance
to $\ep$ is less than or equal to $n$.

\begin{definition}~\cite{cannon}\label{def:ac}
A group $G$ is {\em almost convex} with respect
to the finite symmetric
generating set $A$ if there is a constant $k$
such that
for all $n \in \N$ and for all $g,h$ in the
sphere $S(n)$ satisfying
$d_\ga(g,h) \leq 2$
(in the Cayley
graph $\ga=\ga(G,A)$), there is a path inside 
the ball $B(n)$ from
$g$ to $h$ of length no more than $k$.
\end{definition}

Cannon~\cite{cannon} showed that every group 
satisfying an almost convexity condition
over a finite generating set is also finitely presented.
Thiel~\cite{thiel} showed that almost convexity
is a property that depends upon the finite
generating set used.  

In the proof of
Theorem~\ref{thm:aceti} below, 
we show that 
a pair $(G,A)$ that is almost convex 
is \afstkbl.  Moreover the class of almost convex groups is
exactly the class of geodesically stackable groups, and 
this must hold with respect to the shortlex normal forms.
Given a choice of total ordering on $A$, 
a word $z_g \in A^*$ is the {\em shortlex normal form}
for $g \in G$ if $\rep(z_g)=g$ and whenever $w \in A^*$
with $w=_G z_g$, then either the word lengths (in $A^*$) satisfy
$l(w)>l(z_g)$, or else $l(w)=l(z_g)$ and $w$ is lexicographically
greater than $z_g$ with respect to the ordering on $A$.

\begin{theorem}\label{thm:aceti}
Let $G$ be a group with finite generating set $A$.  The following
are equivalent:
\begin{enumerate}
\item The pair $(G,A)$ is almost convex.
\item The pair $(G,A)$ is geodesically \stkbl.
\item The pair $(G,A)$ is geodesically \afstkbl\ with 
respect to shortlex normal forms.
\end{enumerate}
\end{theorem}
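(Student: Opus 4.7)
The plan is to prove the cycle $(3) \Rightarrow (2) \Rightarrow (1) \Rightarrow (3)$. The implication $(3) \Rightarrow (2)$ is immediate, since every algorithmic geodesic \stkbl\ structure is in particular a geodesic \stkbl\ structure.

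For $(2) \Rightarrow (1)$, assume $(G,A)$ admits a geodesic \stkbl\ structure with bounded flow function $\ff$ of bound $k$ and normal-form tree $\ttt$. Given $g,h \in S(n)$ with $d(g,h)=1$, let $e$ be the edge from $g$ to $h$. Since $y_g$ and $y_h$ are both geodesic of length $n$, neither $y_g a = y_h$ nor $y_g = y_h a^{-1}$ can hold, so $e \in \ves$ and $\ff(e)$ is a path from $g$ to $h$ of length at most $k$. The key observation is that if $v$ is any vertex of $\ff(e)$ outside $B(n)$, every edge of $\ff(e)$ incident to $v$ is degenerate: were such an incident edge $e'$ in $\ves$, the $\alpha$-decrease condition would give $\alpha(e') < \alpha(e) = n$, while $d(\ep,v) \geq n+1$ forces $\alpha(e') \geq n + \tfrac{1}{2}$, a contradiction. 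Therefore each maximal excursion of $\ff(e)$ outside $B(n)$ is (together with its two boundary edges) a tree path, and can be replaced by the reduced tree path between its entry and exit vertices; this replacement remains in $B(n)$ (since the last common $\ttt$-ancestor of two vertices of $B(n)$ lies in $B(n)$, as $\ttt$ is built from geodesics) and is no longer than the original excursion. The resulting path from $g$ to $h$ has length at most $k$ and lies in $B(n)$. The case $d(g,h)=2$ is handled by applying the same bump-reduction to the concatenation $\ff(e_1)\cdot \ff(e_2)$ along any length-two path $g \to v \to h$, yielding a path in $B(n)$ of length at most $2k$.

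For $(1) \Rightarrow (3)$, fix an ordering on $A$, let $\cc$ be the set of shortlex normal forms (which is prefix-closed and geodesic), and let $\ttt$ be the resulting spanning tree. Suppose $(G,A)$ is almost convex with constant $k_0$. Define $\ff$ on a recursive edge $e = e_{g,a}$ by cases on $n=d(\ep,g)$ and $m=d(\ep,ga)$. If $m = n\pm 1$, let $g^*$ denote the $\ttt$-parent in $S(\min(n,m))$ of the endpoint of $e$ on the larger sphere; then $g^*$ and the other endpoint both lie in $S(\min(n,m))$ at $\ga$-distance at most $2$, so almost convexity provides a path of length at most $k_0$ between them inside $B(\min(n,m))$, and we adjoin the tree edge to $g^*$. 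If $m=n$, let $g',g'' \in S(n-1)$ be the $\ttt$-parents of $g$ and $ga$; then $d(g',g'') \leq 3$, and the standard equivalence of $AC(2)$ with $AC(r)$ for all $r$ yields a uniformly bounded path from $g'$ to $g''$ inside $B(n-1)$, which we flank with the two tree edges. In every case, recursive edges appearing in $\ff(e)$ have both endpoints in a ball strictly smaller than the sphere supporting $\alpha(e)$, so $\alpha$ strictly decreases along $<_{\ff}$ (giving a well-founded order on $\ves$), and the path lengths are uniformly bounded. Algorithmicity follows from Cannon's result that almost convex groups have solvable word problem: the shortlex normal form of any word, the tree parents, and the bounded almost-convexity paths (found by exhaustive search over words of length at most the almost convexity constant) are all effectively computable, so the graph $S_\ff$ is computable.

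The step I expect to be the main obstacle is the bump-reduction in $(2) \Rightarrow (1)$. A priori, the $\alpha$-decrease condition only restricts recursive edges of $\ff(e)$, so it is not obvious that $\ff(e)$ can be confined to $B(n)$; the payoff is that any excursion of $\ff(e)$ outside $B(n)$ must consist entirely of tree edges, and only then does the geodesicity of the normal forms allow one to shorten the excursion back into $B(n)$. Carrying this analysis through the case $d(g,h)=2$ when the intermediate vertex $v$ itself lies on $S(n+1)$ requires careful bookkeeping about which edges of the concatenation $\ff(e_1)\cdot \ff(e_2)$ meet the non-$B(n)$ vertices, but the same $\alpha$-based dichotomy applies uniformly.
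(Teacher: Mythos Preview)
Your overall strategy matches the paper's: the same implication cycle, the same use of shortlex normal forms and Cannon's word-problem solution in $(1)\Rightarrow(3)$, and the same bump-reduction idea (replacing tree subpaths by tree geodesics) in $(2)\Rightarrow(1)$.

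There is, however, a genuine gap in your $(2)\Rightarrow(1)$, precisely in the $d(g,h)=2$ case when the intermediate vertex $v$ lies in $S(n+1)$. You correctly show that every edge of the concatenated path incident to a \emph{vertex} outside $B(n)$ must be degenerate, so that excursions can be shortcut through the tree. But this only forces all \emph{vertices} of the resulting path into $B(n)$. The paper's $B(n)$ is the metric ball in $\Gamma$, and a recursive edge $e'$ with both endpoints in $S(n)$ (that is, $\alpha(e')=n$) has its midpoint at distance $n+\tfrac12$ from $\epsilon$, hence is not contained in $B(n)$. Such edges can occur in $\Phi(e_i)$ when $\alpha(e_i)=n+\tfrac12$: geodesic stackability only gives $\alpha(e')<n+\tfrac12$, which allows $\alpha(e')=n$. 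Your bump-reduction never touches these edges, since neither endpoint triggers an excursion. The paper handles exactly this by applying $\Phi$ a \emph{second} time to each recursive edge with $\alpha(e')=n$; the resulting recursive sub-edges then satisfy $\alpha<n$ and genuinely lie in $B(n)$. This double iteration is why the paper's almost-convexity constant is $2M^2+2$ rather than your $2k$. The obstacle you anticipated (bookkeeping of tree excursions near $v$) is actually the easy part; the real subtlety is these ``horizontal'' recursive edges between two $S(n)$-vertices.

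One smaller remark on $(1)\Rightarrow(3)$: in the case $d(\epsilon,g)=d(\epsilon,ga)=n$ the paper simply takes the almost-convexity path from $g$ to $ga$ inside $B(n)$ directly, since $d(g,ga)=1\le 2$. Your detour through the tree-parents in $S(n-1)$ and the $AC(2)\Leftrightarrow AC(r)$ equivalence is correct but unnecessary.
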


\begin{proof}
Suppose that the group $G$ has a finite symmetric
generating set $A$, and let $\ga$ be the corresponding
Cayley graph with path metric $\td$.   The implication
(3) $\Rightarrow$ (2) is immediate.

\smallskip

\noindent{\em (1) implies (3):}

Suppose that the group $G$ 
is almost convex with respect to $A$, 
with an almost convexity constant $k$.
Let $\cc=\{z_g \mid g \in G\}$ be the 
set of shortlex  normal forms over $A$ for $G$
(with respect to any choice of total ordering of $A$). 
Define a relation $<_\alpha$ on the set $\ves=\vec E_{r,\cc}$ of 
recursive edges by $e'<_\alpha e$ whenever $\alpha(e')<\alpha(e)$,
where $\alpha(\ega):=\frac{1}{2}(d(\ep,g)+d(\ep,ga))$
for all $\ega \in \ves$; then $<_\alpha$ inherits
the property of being a well-founded
strict partial ordering from the usual ordering on 
$\N[\frac{1}{2}]$.

Define a function $\sff:\ves \ra A^*$ as follows.
Let $\ega$ be any element of $\ves$. 

\noindent {\em Case I.} If $\td(\ep,g)=\td(\ep,ga)=n$, then 
the points $g$ and
$ga$ lie in the same sphere, and almost convexity of
$(G,A)$ 
implies that there is a directed edge path in $\ga$ from $g$ to $ga$
of length at most $k$ 
that lies in the ball $B(n)$.  
In this case define $\sff(\ega)$ to be the shortlex 
least word over $A$
that labels a path in $B(n)$ from $g$ to $ga$.
For any edge $e' \in \ves$ lying in the path $\path(g,\sff(\ega))$
(starting at $g$ and labeled by $\sff(\ega)$),
the midpoint $p$ of $e'$ lies in $B(n)$,
and so at least one of the endpoints of $e'$ must
lie in $B(n-1)$.  Then $\alpha(e') \le n-\frac{1}{2}<n=\alpha(\ega)$.

\noindent {\em Case II.}
If $\td(\ep,g)=n$ and $\td(\ep,ga)=n+1$,
then 
we can write $z_{ga} =_{A^*} z_{h}b$ for some $h \in G$
and $b \in A$.
Hence $g,h \in S(n)$ and $d_\ga(g,h) \le 2$.  
In this case we 
define $\sff(\ega):=xb$ where $x$ is the shortlex least
word over $A$ that labels 
a path in $B(n)$
from $g$ to $h$.  The path
$\path(g,\sff(\ega))$ has length at most
$k+1$, and the final edge
in this path, labeled by $b$, is degenerate.  Thus any recursive
edge $e' \in \ves$ in this path lies
in $B(n)$, and we have
$\alpha(e') \le n-\frac{1}{2}<n+\frac{1}{2}=\alpha(\ega)$
in this case.

\noindent {\em Case III.}
If $\td(\ep,g)=n+1$ and $\td(\ep,ga)=n$, then
$z_g=_{A^*} z_{g'}c$ for some $c \in A$ and $g' \in G$, and we define
$\sff(\ega):=c^{-1}y$ where $y$ is
the shortlex least word, of length at most $k$, labeling a path
in $B(n)$ from $g'$ to $ga$.
The initial edge of $\path(g,\sff(\ega))$ labeled $c^{-1}$
is degenerate, and as in Case II we have $\alpha(e')<\alpha(\ega)$
for all recursive edges $e'$ in this path.


In all cases there is a path of length at most $k+1$
satisfying the conditions of Definition~\ref{def:fstkbl}
for the ordering $<_\alpha$ on $\ves$, and $\sff$ is a stacking map.
Since the words in $\cc$ are geodesics,
then $G$ is geodesically \stkbl\ over $A$
with respect to the shortlex normal forms.
  
We are left with showing computability
for the subset 
$$
\alg=\{(w,a,\sff(e_{w,a})) \mid w \in A^*,~a \in A,~\wa \in \ves\}
\cup \{(w,a,a) \mid w \in A^*,~a \in A,~\wa \in \dgd\}
$$ 
of $A^* \times A \times A^*$.
Suppose that $(w,a,x)$ is any element of 
$A^* \times A \times A^*$.
Cannon~\cite[Theorem~1.4]{cannon} has shown
that the word problem is solvable for $G$, and
so by enumerating the words in $A^*$ in increasing
shortlex order, and checking whether each in turn
is equal in $G$ to $w$, we can find the shortlex
normal form $z_w$ for $w$.  Similarly we compute $z_{wa}$.  
If the word $z_w a z_{wa}^{-1}$ freely reduces
to $1$, then the tuple $(w,a,x)$ lies in $\alg$ 
if and only if $x=a$.

Suppose on the other hand that the word
$z_w a z_{wa}^{-1}$ does
not freely reduce to $1$.  
If (as in Case I above)
the word lengths
$l(z_w)=l(z_{wa})$ both equal a natural number $n$, then
we enumerate the
elements of the finite set $\cup_{i=0}^k A^i$
of words of length up to $k$ in increasing
shortlex order.  For each word $y=a_1 \cdots a_m$
in this enumeration, with each $a_i \in A$,
we use the word problem solution again 
to compute the word length $l_{y,i}$
of the normal form $z_{wa_1 \cdots a_i}$ for
each $0 \le i \le m$.
If each $l_{y,i} \le n$, and equalities $l_{y,i} = n$
do not hold for two consecutive indices $i$, then
$(w,a,x)$ lies in $\alg$ if and only if $x=y$
and we halt the enumeration; otherwise, we go on to 
check the next word in our enumeration.
The argument for Cases II-III in which 
$l(z_w)=l(z_{wa}) \pm 1$ are
similar.  

Combining the algorithms in the previous two paragraphs,
the set $\alg$ is computable and the stackable
structure above for $G$ is algorithmic.

\noindent {\em (2) implies (1):}

Suppose that the group $G$ is geodesically stackable
over the generating set $A$
 with respect to a set $\cc$ of (geodesic) normal forms,
and let $\ttt$ be the corresponding tree of degenerate edges. 
Let $\sff:\ves=\vec E_{r,\cc} \ra A^*$ be
an associated stacking map and let $\ff:\vece \ra \vec P$
be the corresponding bounded flow 
function from Corollary~\ref{cor:stkbldefs}.
Let $M:=\max\{l(\sff(e)) \mid e \in \ves\}$ and let $k:=2M^2+2$.
Also let $g,h$ be any two points in a sphere
$S(n)$ with $d_\ga(g,h) \le 2$.

If $d(g,h) = 1$, then $h=_G ga$ for some $a \in A$.
Moreover, since all normal forms in $\cc$ are geodesics,
the edge $\ega$ from $g$ to $h$ must be recursive.  Then
the path $p:=\ff(\ega)$ labeled $\sff(\ega)$ of length $\le M<k$ 
from $g$ to $h$ satisfies the property that for every edge $e'$ in 
$p$, either $e' \in \dgd$ or
else $e' \in \ves$ with $e' <_{\ff} e$.  
Whenever $e' \in \ves$, then
applying Definition~\ref{def:geostk} we have
$\alpha(e')<\alpha(\ega) =n$, and so the edge $e'$ 
must lie in $B(n)$.
If needed we replace each subpath of $p$
whose edges all lie in $\dgd$ by the shortest
path in the tree $\ttt$ of degenerate edges
between the same endpoints.  The effect of this
replacement can only shorten the path $p$,
and all edges in the new path must
lie in $B(n)$.

On the other hand, suppose that $d(g,h)=2$,
with $h=gab$ for some $a,b \in A$.
If $d(\ep,ga)=n-1$, then there is a path
of length $2 \le k$ from $g$ to $h$ lying inside $B(n)$,
and if $d(\ep,ga)=n$, we can apply the previous
paragraph twice to obtain a path of length at most
$2M<k$ from $g$ to $h$ via $ga$.
Finally consider the case that $d(\ep,ga)=n+1$,
and write the (geodesic) normal form for $ga$ 
as $y_{ga}=y_{g'}a'$ where $g' \in S(n)$
and $a' \in A$.  It suffices to show that
there is a path in $B(n)$ from $g$ to $g'$
of length at most $M^2<\frac{1}{2}k$, since a similar
proof results in such a path from $g'$ to $h$.
If $g=g'$ we are done, so suppose that $g \neq g'$.

Now the 
edge $e_{g',a'}$ lies in the tree $\ttt$
defined by $\cc$.  Since the normal forms are geodesic,
there can only be one directed edge in $\ttt$
ending at $ga=g'a'$ and starting at a point in $S(n)$, and
so the edge $\ega$ from $g$ to $ga$ must be recursive.
Each recursive edge $e'$ in the 
path $p:=\ff(\ega)$
satisfies $\alpha(e')<\alpha(\ega)=n+\frac{1}{2}$,
and so both endpoints of $e'$ lie in $B(n)$.
Replace each recursive edge $e'$ in the path $p$
satisfying $\alpha(e')=n$ by
the directed path $\ff(e')$, to obtain a new 
directed path $p'$
of length at most $M^2$ from $g$ to $ga$.
Now for every recursive edge $e''$ in the path $p'$,
we have $\alpha(e'')<n$, and so
all of the recursive edges in the path $p'$
lie in $B(n)$.  

Next as above we replace
each subpath of $p'$ consisting
solely of degenerate edges in $\ga$  by the shortest
path in the tree $\ttt$ 
between the same endpoints, resulting in another path
$p''$ from $g$ to $ga$ all of whose recursive
edges lie in $B(n)$.  The path $p''$ must
end with a path in the tree $T$ from a point in
$B(n)$ to the vertex $ga$, and therefore the
last directed edge of this path is the edge $e_{g',a'}$.
Let $\tilde p$ be the path $p''$ with this
last edge removed.  Then $\tilde p$
is a path from $g$ to $g'$
lying in $B(n)$ of length at most
$M^2$, as required.
\end{proof}

\begin{remark} {\em 
Cannon's word 
problem algorithm for almost convex groups, which we
applied in the proof of Theorem~\ref{thm:aceti},
requires the use of an enumeration of a
finite set of words over $A$, namely
those that represent $\ep$ in $G$ and have length at most 
$k+2$.  As Cannon
also points out~\cite[p.~199]{cannon}, although this
set is indeed recursive, there may not be an algorithm
to find this set, starting from $(G,A)$ and the constant $k$.}
\end{remark}




\end{document}